\documentclass[12pt]{amsart}

\usepackage{amsmath,amsfonts,latexsym,graphicx,amssymb,amsthm}
\usepackage[tips,matrix,arrow]{xy}

\usepackage{xcolor}

%%%%%%%%%%%%%%%%%%%%%
%\usepackage{fontspec,xltxtra,xunicode}

%\setmainfont[Mapping=tex-text,Bold=Hiragino Kaku Gothic Pro W6]{Hiragino Mincho Pro W3}
%\setmainfont[Mapping=tex-text]{Hiragino Mincho ProN W3}
%\setmainfont[Mapping=tex-text,Bold=MS PGothic]{MS PMincho}
%\setmainfont[Mapping=text-text]{MS PMincho}
%\setmainfont[Mapping=tex-text,Bold=TakaoPGothic]{TakaoMincho}
%\setmainfont[Mapping=tex-text,Bold=Kochi Gothic]{Kochi Mincho}
%\setmainfont[Mapping=tex-text,Bold=TakaoPGothic]{Sazanami Mincho}

%\usepackage{helvet}         % selects Helvetica as sans-serif font
%\usepackage{courier}        % selects Courier as typewriter font
%\usepackage{type1cm}        % activate if the above 3 fonts are

%\usepackage{setspace}
%\setstretch{1.2}
%\usepackage{indentfirst}
%\XeTeXlinebreaklocale "ja"
%\XeTeXlinebreakskip=0em plus 0.1em minus 0.01em
%\XeTeXlinebreakpenalty=0
%\renewcommand{\baselinestretch}{1.4}
%\settowidth{\parindent}{あ}
%%%%%%%%%%%%%%%%%%%%%%

\setlength{\topmargin}{-7mm}
\setlength{\textheight}{235mm}
\setlength{\textwidth}{165mm}
\setlength{\oddsidemargin}{0.15cm}
\setlength{\evensidemargin}{0.15cm}

\newcommand{\C}{\mathbb{C}}

\newcommand{\R}{\mathbb{R}}
\newcommand{\Z}{\mathbb{Z}}

\newcommand{\K}{\mathbb{K}}

\newcommand{\nh}{\mathbb{H}}

\newcommand{\fb}{\mathfrak{b}}

\newcommand{\fg}{\mathfrak{g}}
\newcommand{\fh}{\mathfrak{h}}

\newcommand{\fn}{\mathfrak{n}}

\newcommand{\fS}{\mathfrak{S}}

\newcommand{\vep}{\varepsilon}

\newcommand{\ch}{\mathrm{ch}}

\newcommand{\End}{\mathrm{End}}

\newcommand{\rank}{\mathrm{rank}}

\newcommand{\Ind}{\mathrm{Ind}}

\newcommand{\mult}{\mathrm{mult}}

\newcommand{\tr}{\mathrm{tr}}

\newcommand{\cE}{\mathcal{E}}

\newcommand{\cK}{\mathcal{K}}

\newcommand{\cO}{\mathcal{O}}
\newcommand{\cP}{\mathcal{P}}

\newcommand{\tTh}{\widetilde{Th}}

\newtheorem{conj}{Conjecture}[section]
\newtheorem{cor}{Corollary}[section]
\newtheorem{thm}{Theorem}[section]
\newtheorem{prop}{Proposition}[section]
\newtheorem{lemma}{Lemma}[section]
\newtheorem{rem}{Remark}[section]
\newtheorem{Def}{Definition}[section]

\newenvironment{Ac}%
 {\hspace*{-1.2em}\textbf{Acknowledgment.}\hspace{0.7em}}{}

\begin{document}

\title{ Invariants of the Weyl group of type $A_{2l}^{(2)}$}
\author{Kenji IOHARA and Yosihisa SAITO}
\address{Univ Lyon, Universit\'{e} Claude Bernard Lyon 1, CNRS UMR 5208, Institut Camille Jordan, 
43 Boulevard du 11 Novembre 1918, F-69622 Villeurbanne cedex, France}
\email{iohara@math.univ-lyon1.fr}
\address{Departement of Mathematics, Rikkyo University, Toshima-ku, Tokyo 171-8501, Japan}
\email{yoshihisa@rikkyo.ac.jp,\, yosihisa@ms.u-tokyo.ac.jp}

\maketitle

\begin{center}
\textit{Dedicated to Professor Kyoji Saito on the occasion of his 75th Anniversary}
\end{center}

\begin{abstract} 
In this note, we show the polynomiality of the ring of invariants with respect to the Weyl group of type $A_{2l}^{(2)}$. 
\end{abstract}

\tableofcontents

\section*{Introduction}
Let $V$ be a finite dimensional vector space over a field $\K$ of characteristic $0$. 
A \textit{reflection} is a finite order linear transformation on $V$ that fixes a 
hyperplane. Such a linear transformation is called \textit{real} if it is of order $2$ and 
\textit{complex} if it is of order $>2$. 
A group $G$ of linear transformations is a \textit{finite reflection group} if it is a 
finite group generated by reflections. In particular, a finite reflection group generated 
only by real reflections are called \textit{Coxeter group}, otherwise it is called a 
\textit{complex refelction group}. The $G$-action on $V$ induces an $G$-action on 
its dual $V^\ast$, hence on its symmetric algebra $S^\bullet(V^\ast)$. C. Chevalley 
(for real case) \cite{Ch} and
G. C. Shephard and J. A. Todd (for complex case)  \cite{ST} have shown that the ring 
of invariants $S(V^\ast)^G$ is a $\Z$-graded algebra generated by $\dim_K V$ 
homogeneous elements that are algebraically independent. Notice that, for a finite 
group, there is the so-called Reynold's operator to obtain invariants explicitly. 

Now, for an affine Weyl group $W$, one can consider the $W$-invariant theta 
functions defined on a half space $Y$ of a Cartan subalgebra $\fh$ of the affine Lie 
algebra $\fg$ whose Weyl group is $W$. 
A first attempt has been made to determine the structure of the ring of such 
$W$-invariant theta functions by E. Looijenga \cite{L} in 1976 whose argument 
has not been sufficient, as was pointed out by I. N. Bernstein and O. Schwarzman 
\cite{BS} in 1978. Indeed, the ring $\tTh^+$ of $W$-invariant theta functions is an 
$\cO_\nh$-module, where $\nh$ is the Poincar\'{e} upper half plane 
$\{ \tau \in \C \vert \, \mathrm{Im}\, \tau>0\, \}$, spanned by the normalized 
characters $\{\chi_{\Lambda}\}_{\Lambda \in P_+ \mod \C \delta}$ of simple 
integrable highest weight modules over $\fg$. We remark that each 
$\chi_\Lambda$ is holomorphic on $Y$, as was shown by M. Gorelik and V. Kac 
\cite{GK}. 

In 1984, V. G. Kac and D. Peterson \cite{KP} has published a long monumental article 
on the modular transformations, in particular, the Jacobi transformation, of the 
characters of simple integrable highest weight modules over affine Lie algebras. In 
particular, as an application of their results, they presented a list of the Jacobian of 
the fundamental characters for affine Lie algebras except for type $F_4^{(1)}, 
E_6^{(1)}, E_6^{(2)}, E_7^{(1)}$ and $E_8^{(1)}$, and stated that the ring $\tTh^+$ of 
$W$-invariant theta functions is a polynomial ring over $\cO_\nh$ generated by 
fundamental characters $\{\chi_{\Lambda_i}\}_{0\leq i\leq l}$. Unfortunately, their 
proofs have never been published (cf. Ref. [35] in \cite{KP}). In 2006, J. Bernstein and 
O. Schwarzman in \cite{BS1} and \cite{BS2} presented a detailed version of their 
announcement \cite{BS}, where their final result is a weak form of what V. Kac and D. 
Peterson has announced. Moreover, J. Bernstein and O. Schwarzman excluded two 
cases: type $D_l^{(1)}$ and $A_{2l}^{(2)}$.  \\

In this article, we determine the Jacobian of the fundamental characters for the 
affine Lie algebra $\fg$ of type $A_{2l}^{(2)}$. In particular, 
we show that the explicit form of Jacobian in Table J of \cite{KP} for type 
$A_{2l}^{(2)}$ is valid. As a corollary, it follows that the fundamental characters 
$\{\chi_{\Lambda_i}\}_{0\leq i\leq l}$ of $\fg$ are algebraically 
independent. 

This article is organised as follows.
As the practical computation requires many detailed information, 
Section 1 is devoted to  providing root datum, detailed description on the non-
degenerate symmetric invariant bilinear form restricted on a Cartan subalgebra 
$\fh$ and its induced bilinear
form on the dual $\fh^\ast$, and the structure of the affine Weyl group of type 
$A_{2l}^{(2)}$. In Section 2, we recall the theta functions associated to the 
Heisenberg subgroup of the affine Weyl group of type $A_{2l}^{(2)}$ and the modular 
transformation of the normalized characters of irreducible integrable highest weight 
$\fg$-modules. In Section 3, 
After some technical preliminary computations, we study the modular 
transformations of the Jacobian of the fundamental characters. Finally in Section 4, 
we determine the Jacobian of the fundamental characters of type $A_{2l}^{(2)}$. \\

This text contains many well-known facts about the characters of integrable highest weight modules over affine Lie algebras for the sake of reader's convenience. \\

\begin{Ac} At an early stage of this project, K. I. was partially supported by the program FY2018 JSPS Invitation.  
He is also partially supported by the French ANR (ANR project ANR-15-CE40-0012).
Y .S. is partially supported by JSPS KAKENHI Grant Number 16K05055.
\end{Ac}

\section{Preliminaries}\label{sect_data}
In this section, we recall the definition of the affine Lie algebra of type $A_{2l}^{(2)}$
and its basic properties. 
Everything given in this section is completely known, see e.g., \cite{Kac} and/or 
\cite{MP}. Nevertheless, in order  to fix our convention clearly and to avoid 
unnecessary confusion, we collect several explicit data that would be useful for 
further computations. 
%----------------------------------------------
\subsection{Basic data}\label{sect_data}
Here, we fix the enumeration of the vertices in the Dynkin diagram of type 
$A_{2l}^{(2)}$ as follows (cf. \cite{Kac}): 
\[ \UseTips
\xymatrix @=1.2pc @R=0.8pt
{
&&\alpha_0&\alpha_1&&&\alpha_{l-2}&\alpha_{l-1}&\alpha_{l} \\
&A_{2l}^{(2)}~(l\geq 2) \quad
&{\xy*\cir<4pt>{}\endxy}\ar@2{<-}[r]
&{\xy*\cir<4pt>{}\endxy}\ar@{-}[r]
&\ar@{.}[r]
&\ar@{-}[r]
&{\xy*\cir<4pt>{}\endxy}\ar@{-}[r]
&{\xy*\cir<4pt>{}\endxy}\ar@2{<-}[r]
&{\xy*\cir<4pt>{}\endxy} \\
&&&&&&&&\\
&&&&&&&& \\
&&\alpha_0&\alpha_1&&&&& \\
&A_2^{(2)} \qquad
&**[r] {\xy*\cir<4pt>{}\endxy}
  < \ar@{=} @<2pt> [r] \ar@{=} @<-2pt> [r]
&{\xy*\cir<4pt>{}\endxy}
&&&&& \\
}
\]

In particular, this implies that the corresponding generalized Cartan matrix 
$A=(a_{i,j})_{0\leq i,j\leq l}$ is given by
\[ A=\begin{pmatrix}
   2 & -2 & 0 & \cdots &\cdots & \cdots &  0 \\
   -1 & 2 & -1 & \ddots &         &            & \vdots \\
   0  & -1 & 2 & \ddots &\ddots &        & \vdots \\
   \vdots & \ddots & \ddots & \ddots & \ddots &\ddots & \vdots \\
   \vdots && \ddots & \ddots  & 2 & -1 & 0   \\
   \vdots &&& \ddots & -1 & 2 & -2 \\
   0 & \cdots & \cdots & \cdots & 0 & -1 & 2
   \end{pmatrix}
\]
for $l\geq 2$, and
\[ A=\begin{pmatrix} 2 & -4 \\ -1 & 2 \end{pmatrix}
\]
for $l=1$. A realization of the 
generalized Cartan matrix $A$ is a triple $(\fh, \Pi, \Pi^\vee)$ such that 
\begin{itemize}
\item[(i)] $\fh$ is an $l+2$-dimensional $\C$-vector space, 
\item[(ii)] $\Pi^\vee=\{h_i\}_{0\leq i\leq l}$ is a linearly independent subset of $\fh$, 
\item[(iii)] $\Pi=\{\alpha_i\}_{0\leq i\leq l}$ is a linearly independent subset of 
$\fh^\ast:=\mbox{Hom}_{\mathbb{C}}(\fh,\mathbb{C})$, and
\item[(iv)] $\langle h_i, \alpha_j \rangle=a_{i,j}$ for any 
$0\leq i,j\leq l$.
\end{itemize}
Here, $\langle \cdot,\cdot\rangle:\fh\times \fh^*\to \mathbb{C}$ 
is the canonical pairing. 

The labels $a_0,a_1,\cdots, a_l$ and co-labels $a_0^\vee, a_1^\vee, \cdots, 
a_l^\vee$ are by definition, relatively prime positive integers satisfying
\[ (a_0^\vee, a_1^\vee, \cdots, a_l^\vee)A=(0,0,\cdots, 0), \qquad
    A\begin{pmatrix} a_0 \\ a_1 \\ \vdots \\ a_l \end{pmatrix}=
    \begin{pmatrix} 0 \\ 0 \\ \vdots \\ 0 \end{pmatrix}.
\]
Explicitly, they are given by the following table: 
 \begin{center}
    \begin{tabular}{|c||c|c|c|c|c|} \hline
  
  $i$ & $0$ & $1$ & $\cdots $ & $l-1$ & $l$ \\ \hline
    $a_i$         & $2$ & $2$ & $\cdots$ & $2$ & $1$  \\ \hline
    $a_i^\vee$ & $1$ & $2$ & $\cdots$ & $2$ & $2$ \\ \hline  
    \end{tabular}
\end{center}

As a corollary, the Coxeter number $h$ and the dual Coxeter number $h^\vee$ are 
shown to be
\[ h:=\sum_{i=0}^l a_i=2l+1, \qquad h^\vee:=\sum_{i=0}^l a_i^\vee=2l+1. \]

Let us introduce some special elements $c$ of 
$\fh$, and $\delta$ of $\fh^\ast$ by
\[c:=\sum_{i=0}^l a_i^\vee h_i=h_0+2(h_1+\cdots +h_l) \in \fh, \qquad 
\delta:=\sum_{i=0}^l a_i\alpha_i=2(\alpha_0+\cdots+\alpha_{l-1})+
\alpha_l. \] 
By definition, we have
\[ \langle c, \alpha_i\rangle =0, \qquad \langle h_i, \delta \rangle=0, \]
for any $0\leq i\leq l$. 
Now, define $d \in \fh$ and $\Lambda_0 \in \fh^\ast$
the elements characterized by
\[ \langle d, \alpha_i\rangle =\delta_{i,0}, \qquad 
   \langle h_i, \Lambda_0 \rangle=\delta_{i,0}, \qquad
   \langle d, \Lambda_0 \rangle =0. \]
It can be checked that
\[ \langle c, \Lambda_0 \rangle=a_0^\vee=1, \qquad \langle d, \delta 
\rangle=a_0=2. \]

For later purpose, we recall the definition of $i$-th fundamental weight 
$\Lambda_i \in \fh^\ast$, the element of $\fh^\ast$ satisfying
\[ \langle h_i, \Lambda_j\rangle=\delta_{i,j} \qquad (0\leq i,j\leq l), \qquad \langle d, 
\Lambda_i \rangle=0.\]
Clearly, one has
\[ \langle c, \Lambda_i \rangle=a_i^\vee.\]
%--------------------------------------------------
\subsection{Bilinear forms on $\fh$ and $\fh^\ast$}\label{sect_normalized-form}
Here, we recall the bilinear form $I$ on $\fh^\ast$ and $I^\ast$ on $\fh$ that are 
called normalized invariant forms by V. Kac \cite{Kac} in an explicit manner. 

On $\fh$:
\begin{align*}
&I^\ast(h_i, h_j)=a_j(a_j^\vee)^{-1}a_{i,j} \qquad (0\leq i,j\leq l), \\
&I^\ast(h_i, d)=0 \qquad (0<i\leq l), \\
&I^\ast(h_0, d)=a_0=2, \qquad I^\ast(d,d)=0.
\end{align*}
In particular, we have
\[ I^\ast(c,d)=2. \]

On $\fh^\ast$: 
\begin{align*}
&I(\alpha_i,\alpha_j)=a_i^\vee (a_i)^{-1}a_{i,j} \qquad (0\leq i,j\leq l), \\
&I(\alpha_i,\Lambda_0)=0 \qquad (0<i\leq l), \\
&I(\alpha_0, \Lambda_0)=a_0^{-1}=\frac{1}{2}, \qquad 
I(\Lambda_0, \Lambda_0)=0.
\end{align*}
We also have 
\[ I(\Lambda_0,\delta)=1.\]
 
The above bilinear forms are introduced in such a way that the linear map 
$\nu: (\fh, I^\ast) \rightarrow (\fh^\ast, I)$ defined by
 \[  a_i^\vee\nu(h_i)=a_i\alpha_i, \qquad \nu(c)=\delta, \qquad 
 \nu(d)=a_0\Lambda_0=2\Lambda_0\]
 becomes an isometry. 
%-------------------------------------
\subsection{Affine Lie algebra of type $A_{2l}^{(2)}$}
Let $\fg=\fg(A)$ be the Kac-Moody Lie algebra attached with the generalized 
Cartan matrix $A$ which is introduced in Subsection \ref{sect_data}. Since
$A$ is symmetrizable, it is defined to be the Lie algebra over the complex number
field $\mathbb{C}$ generated by $e_i,f_i\ (0\leq i\leq l)$ and $\fh$, with 
the following defining relations{\rm :} 
\begin{itemize}
\item[(i)] $[e_i,f_j]=\delta_{i,j}h_i$ \quad for $0\leq i,j\leq l$,
\vskip 1mm
\item[(ii)] $[h,h']=0$ \quad for $h,h'\in\fh$, 
\vskip 1mm
\item[(iii)] $[h,e_i]=\langle h,\alpha_i\rangle e_i$,\quad 
$[h,f_i]=-\langle h,\alpha_i\rangle f_i$\quad for $h\in\fh$ and $0\leq i\leq l$,
\vskip 1mm
\item[(iv)] $(\mbox{ad}\, e_i)^{-a_{i,j}+1}(e_j)=0$,\quad  
$(\mbox{ad}\, f_i)^{-a_{i,j}+1}(f_j)=0$\quad for $0\leq i\ne j\leq l$.
\vskip 1mm
\end{itemize}
The Lie algebra $\fg$ is called the
\textbf{affine Lie algebra of type $A_{2l}^{(2)}$}.
The vector space $\fh$ is regarded as a commutative subalgebra of $\fg$, 
which is called the Cartan subalgebra of $\fg$. \\

It is well-known that $\fg$ admits the root space decomposition:
\[\fg=\fh\oplus\Bigl(\bigoplus_{\alpha\in \Delta}\fg_{\alpha}\Bigr),\]
where $\fg_{\alpha}:=\{X\in\fg\,|\,[h,X]=\langle h,\alpha\rangle X\mbox{ for every }
h\in \fh\}$ is the root space associated to $\alpha\in\fh^*$, 
and $\Delta:=\{\alpha\in\fh^*\setminus\{0\}\,|\,\fg_{\alpha}\ne\{0\}\}$
is the set of all roots. An element $\alpha\in\Delta$ is called a real root if 
$I(\alpha,\alpha)>0$, and the set of all real roots is denoted by $\Delta^{re}$. 
Set $\Delta^{im}:=\Delta\setminus \Delta^{re}$. An element $\alpha\in\Delta^{im}$
is called an imaginary root. The explicit forms of real and imaginary roots will be
given in the next subsection. 

For $\alpha\in \Delta^{re}$, define a reflection $r_{\alpha}\in GL(\fh^*)$ by
$$r_{\alpha}(\lambda):=\lambda-\left\langle \dfrac{2}{I(\alpha,\alpha)}
\nu^{-1}(\alpha),\lambda\right\rangle\alpha$$ 
for $\lambda\in\fh^*$. Let $W$ be the Weyl group of $\fg$. That is, $W$ is
a subgroup of $GL(\fh^*)$ generated by $r_{\alpha}\ (\alpha\in\Delta^{re})$. 
It is known that the action of $W$ on $\fh^*$ preserves the set of roots $\Delta$. 
The detailed structure of the Weyl group $W$ will be given in the next subsection
also. 

%------------------------------------------
\subsection{Affine Weyl group}\label{sect_affine-Weyl}
Here, we describe the structure of the Weyl group of type $A_{2l}^{(2)}$, paying 
attention to the fact that the lattice of the translation part is \textbf{not} an even 
lattice. 

To describe its structure, we regard the $0$-th node as the special vertex, namely, 
the complementary subdiagrams 
\[ \UseTips
\xymatrix @=1.2pc @R=0.7pt 
 {
&&\alpha_1&\alpha_2&&&\alpha_{l-2}&\alpha_{l-1}&\alpha_l \\
&C_l~(l\geq 2) \quad
&{\xy*\cir<4pt>{}\endxy}\ar@{-}[r]
&{\xy*\cir<4pt>{}\endxy}\ar@{-}[r]
&\ar@{.}[r]
&\ar@{-}[r]  
&{\xy*\cir<4pt>{}\endxy}\ar@{-}[r]
&{\xy*\cir<4pt>{}\endxy}\ar@2{<-}[r]
&{\xy*\cir<4pt>{}\endxy}  \\
&&&&&&&& \\
&&&&&&&& \\
&& \alpha_1 &&&&&& \\%\fh
&A_1 \qquad 
&{\xy*\cir<4pt>{}\endxy}
&&&&&& \\
}\]
will be called the finite part of the Dynkin diagram of type $A_{2l}^{(2)}$. \\

Let $\fh_f$ be the subspace of $\fh$ defined by 
$$\fh_f:=\bigoplus_{i=1}^l \C h_i.$$
This can be viewed as the Cartan subalgebra of the 
finite part. In particular, we fix the splitting of $\fh$ as follows:
\begin{equation}\label{eq_splitting1}
 \fh=\fh_f \oplus (\C d \oplus \C c). 
\end{equation}
This is an orthogonal decomposition with respect to the normalized invariant form 
introduced in the previous subsection. We denote the canonical projection to the 
first component by $\overline{\cdot}: \fh \twoheadrightarrow \fh_f$. 
Similarly, we set 
$$\fh_f^\ast:=\bigoplus_{i=1}^l \C \alpha_i$$ 
and the canonical projection to the first component of the decomposition
\begin{equation}\label{eq_splitting2}
 \fh^\ast=\fh_f^\ast \oplus (\C \delta \oplus \C \Lambda_0) 
\end{equation}
will be denoted by the same symbol $\overline{\cdot}: \fh^\ast 
\twoheadrightarrow \fh_f^\ast.$ It should be noticed that the image of the 
restriction of $\nu: \fh \rightarrow \fh^\ast$ to $\fh_f$ is $\fh_f^\ast$.\\

Now, we recall the root system of the finite part. Although most of the informations 
can be found in \cite{Bour} for example, we describe this to fix the convention. 

We identify $\fh_f$ and $\fh_f^\ast$ via the isometry $\nu$.  An orthonormal basis 
$\{\vep_i\}_{1\leq i\leq l}$ of $\fh_f^\ast$ with respect to $I$
should be so chosen that the root system $\Delta_f$ of the finite part and 
the set 
of simple roots $\Pi_f=\{\alpha_i\}_{1\leq i\leq l}$ has the next description: 
\[ \Delta_f=\{\pm \vep_i\pm \vep_j\}_{1\leq i,j\leq l} \cup 
\{\pm 2\vep_i\}_{1\leq i\leq l}, \qquad
\alpha_i=\begin{cases}  \vep_i-\vep_{i+1} \qquad & i<l, \\ 2\vep_i \qquad & i=l.
\end{cases}
\]
\noindent{\underline{\textbf{N.B.}}}\quad For $l=1$, $\Delta_f=\{\pm 2\vep_1\}$. 
\qquad $\Box$ \\

In particular, the sets $\Delta_f^+$, $\Delta_{f,l}^+$ and $\Delta_{f,s}^+$ of positive 
roots, positive long roots and short roots, respectively are given by
\[ \Delta_f^+=\{\vep_i\pm \vep_j\}_{1\leq i<j\leq l} \cup \{2\vep_i\}_{1\leq i\leq l}, 
\qquad
\Delta_{f,l}^+=\{2\vep_i\}_{1\leq i\leq l}, \qquad 
\Delta_{f,s}^+=\{\vep_i\pm \vep_j\}_{1\leq i<j\leq l}.
\]
We remark that $\varpi_i:=\overline{\Lambda_i}=\sum_{j=1}^{i}\vep_j$ for $1\leq 
i\leq l$ is the $i$-th fundamental weight of $\fh_f$.
The highest root $\theta \in \fh_f^*$ can be written as
\[ \theta=\delta-a_0\alpha_0=\delta-2\alpha_0=2\vep_1, \]
and its coroot $\theta^\vee\in \fh_f$ as
\[ \theta^\vee=\frac{2}{I(\theta,\theta)}\nu^{-1}(\theta)=\sum_{i=1}^l h_i, 
\qquad \nu(\theta^\vee)=\frac{1}{2}\theta=\vep_1. \]

Let $W_f$ be a subgroup generated by $r_{\alpha_i}$ for
$\alpha_i\in \Pi_f$. This group is isomorphic to the Weyl group of type $C_l$-type. 
That is, $W_f \cong \fS_l \ltimes (\Z/2\Z)^l$. \\

The set of real roots $\Delta^{re}$ of $\fg$ is described as follows: for $l\geq 2$, 
\begin{align*}
\Delta^{re}=
&\Delta^{re}_s \cup \Delta_m^{re} \cup \Delta_l^{re}, \\
&\text{where} \quad \begin{cases}
\Delta^{re}_s:=\left. \left\{ \frac{1}{2}(\alpha+(2r-1)\delta)\, \right\vert \alpha 
\in\Delta_{f,l},\, r \in \Z\, \right\}, & \\
\Delta_m^{re}:=\{\alpha+r\delta \, \vert\,  \alpha \in \Delta_{f,s}, \, r \in \Z\, \}, & \\
\Delta_l^{re}:=\{\alpha+2r\delta \, \vert\,  \alpha \in \Delta_{f,l}, \, r \in \Z\, \},
\end{cases}
\end{align*}
and for $l=1$, 
\begin{align*}
\Delta^{re}=
&\Delta^{re}_s \cup \Delta_l^{re}, \\
&\text{where} \quad \begin{cases}
\Delta^{re}_s:=\left. \left\{ \frac{1}{2}(\alpha+(2r-1)\delta)\, \right\vert \alpha \in\Delta_{f},\, r \in \Z\, \right\}, & \\
\Delta_l^{re}:=\{\alpha+2r\delta \, \vert\,  \alpha \in \Delta_{f}, \, r \in \Z\, \}.
\end{cases}
\end{align*}
The set of imaginary roots can be described as 
\[ \Delta^{im}=\Z \delta \setminus \{0\}.  \]
For detail, see \cite{Kac}. \\

Now, we can describe the Weyl group $W$. As $c=h_0+2\theta^\vee$, one has
\begin{align*}
r_{\alpha_0}r_\theta(\lambda)=
&r_{\alpha_0}(\lambda-\langle \theta^\vee, \lambda\rangle \theta) 
=
\lambda-\lambda(\theta^\vee)\theta-\langle h_0, \lambda-\langle \theta^\vee, 
\lambda\rangle\theta\rangle \alpha_0 \\
=
&\lambda+\frac{1}{2}\langle c, \lambda\rangle\theta-\left(\langle \theta^\vee, 
\lambda\rangle+\frac{1}{2}\langle c, \lambda\rangle \right)\delta \\
=
&\lambda+\langle c, \lambda\rangle\nu(\theta^\vee)-\left(I(\lambda, 
\nu(\theta^\vee))+\frac{1}{2}\langle c, \lambda\rangle I( \nu(\theta^\vee), 
\nu(\theta^\vee))\right)\delta,
\end{align*}
for $\lambda \in \fh^\ast$. 
Set 
$$M:=\Z\nu(W_f\cdot \theta^\vee)=\bigoplus_{i=1}^l \Z \vep_i,$$
and, for $\alpha \in \fh_{f,\R}^\ast:=M\otimes_{\Z} \R$, we define 
$t_\alpha \in \End_\C(\fh^\ast)$ by
\[ t_\alpha(\lambda):=\lambda+\langle c, \lambda \rangle\alpha-\left( I(\lambda,
\alpha)+\frac{1}{2}\langle c, \lambda \rangle I( \alpha, \alpha) \right)\delta. \]

It is known that 
the assignment the lattice $M$ to $\End_\C(\fh^\ast)$ defined by 
$\alpha$ to $t_{\alpha}$ gives an injective group homomorphism 
$M \hookrightarrow W$.
Furthermore, one has
\[ W \cong W_f \ltimes M\cong 
\bigl(\mathfrak{S}_l \ltimes (\Z/2/\Z)^l\bigr)\ltimes M. \]
We note that the lattice $M$ is an \textbf{odd} lattice.
This fact is essential in the following discussion.

%%%%%%%%%%%%%%%%%%%%%%%%%%%%%%%%%%%%%%%%%%%%
\section{Main theorem}
Let $\fg$ be the affine Lie algebra of type $A_{2l}^{(2)}$ recalled in the previous 
section.

\subsection{Formal characters}
Let $\fn_\pm$ be the subalgebra of $\fg$ generated by $\{ e_i\}_{0\leq i\leq l}$ 
(resp. $\{f_i\}_{0\leq i\leq l}$). We have the so-called triangular decomposition: 
$\fg=\fn_+ \oplus \fh \oplus \fn_-$. \\

Let $V$ be a $\fh$-diagonalizable module, i.e., 
$V=\bigoplus_{\lambda \in \fh^\ast} V_\lambda$ where 
$V_\lambda:=\{v \in V \, \vert \, 
h.v=\langle h, \lambda\rangle v \quad h \in \fh \, \}$. 
Set $\cP(V):=\{ \lambda \in \fh^\ast \vert V_\lambda\neq \{0\}\}$.
Let $\cO$ be the BGG category of $\fg$-modules, that is, it is the subcategory of 
$\fg$-modules whose objects are $\fh$-diagonalizable $\fg$-modules 
$V=\bigoplus_{\lambda \in \fh^\ast} V_\lambda$ satisfying
\begin{enumerate}
\item[(i)] $\dim V_\lambda< \infty$ for any $\lambda \in \cP(V)$, 
\item[(ii)] there exists $\lambda_1, \lambda_2, \cdots, \lambda_r \in \fh^\ast$ 
such that $\cP(V) \subset \bigcup_{i=1}^r (\lambda_i -\Z_{\geq 0}\Pi)$.
\end{enumerate}
A typical object of this category is a so-called highest weight module defined as 
follows. We say that a $\fg$-module $V$ is a highest weight module with highest weight 
$\Lambda \in \fh^\ast$ if
\begin{itemize}
\item[(i)] $\dim V_\Lambda=1$ 
\item[(ii)] $\fn_+.V_\Lambda=\{0\}$ and $V=U(\fg).V_\Lambda$.
\end{itemize}
In particular, the last condition implies that $V=U(\fn_-).V_\Lambda$ as a vector 
space and 
\[ \cP(V):=\{ \lambda \in \fh^\ast \vert V_\lambda\neq \{0\}\} \subset 
\Lambda - \Z_{\geq 0}\Pi. \]
A typical example is given as follows. 
For $\Lambda \in \fh^\ast$, let $\C_\Lambda=\C v_\Lambda$ be the one 
dimensional module over $\fb_+:=\fh \oplus \fn_+$ defined by 
\[ h.v_\Lambda=\langle h, \Lambda \rangle v_\lambda \quad (h \in \fh), \qquad 
\fn_+.v_\Lambda=0. \]
The induced $\fg$-module  $M(\Lambda):=\Ind_{\fb_+}^{\fg}\C_\Lambda$ is called 
the Verma module with highest weight $\Lambda$. It can be shown that for any 
highest weight $\fg$-module $V$ with highest weight $\Lambda \in \fh^\ast$, 
there exists a surjective $\fg$-module map $M(\Lambda) \twoheadrightarrow V$. 
The smallest among such $V$ can be obtained by taking the quotient of 
$M(\Lambda)$ by its maximal proper submodule and the resulting $\fg$-module is 
the irreducible highest $\fg$-module with highest weight $\Lambda$, denoted by 
$L(\Lambda)$. 

Let $\cE$ be the formal linear combination of $e^{\lambda}$ 
($\lambda \in \fh^\ast$) with the next condition: 
$\sum_\lambda c_\lambda e^\lambda \in \cE$ $\Rightarrow$ 
$\exists \lambda_1, \lambda_2, \cdots,; \lambda_r \in \fh^\ast$ such that
\[ \{ \lambda \vert c_\lambda \neq 0\} \subset 
\bigcup_{i=1}^r (\lambda_i - \Z_{\geq 0}\Pi). \]
We introduce the ring structure on $\cE$ by 
$e^{\lambda} \cdot e^{\mu}:=e^{\lambda+\mu}$. \\

The formal character of $V \in \cO$ is, by definition, the element $\ch V \in \cE$ 
defined by
\[ \ch V=\sum_{\lambda \in \cP(V)} (\dim V_\lambda)e^\lambda. \]
$\ch (\cdot )$ can be viewed as an additive function defined on $\cO$ with 
values in $\cE$. For example, The formal character of the Verma module 
$M(\Lambda)$ is given by
\[ \ch M(\Lambda)=e^{\Lambda} \prod_{\alpha \in \Delta_+}(1-e^{-\alpha})^{-
\mult(\alpha)}, \]
where $\Delta_+$ is the set of positive roots of $\fg$ and 
$\mult(\alpha)=\dim \fg_\alpha$ is the multiplicity of the root $\alpha$. 
Set $\vep(w)=(-1)^{l(w)}$ where $l(w)$ is the length of an element $w \in W$. 
For $\Lambda \in P_+:=\{ \Lambda \in \fh^\ast \vert \, \langle h_i, \Lambda\rangle 
\in \Z_{\geq 0} \vert 0\leq i\leq l\}$, the character of $L(\Lambda)$ is known 
as Weyl-Kac character formula and is given by
\[ \ch L(\Lambda)=\frac{\sum_{w \in W} \vep(w)e^{w(\Lambda+\rho)-\rho}}
{\prod_{\alpha \in \Delta_+}(1-e^{-\alpha})^{\mult(\alpha)}}, \]
where $\rho \in \fh^\ast$ is the so-called Weyl vector, i.e., it satisfies $\langle h_i, 
\rho\rangle =1$ for any $0\leq i\leq l$ and $\langle d, \rho\rangle=0$. 
In particular, for $\Lambda=0$, as $L(0)=\C$ is the trivial representation, one 
obtains the so-called denominator identity:
\begin{equation}\label{denominator-id}
\sum_{w \in W} \vep(w) e^{w(\rho)}
=e^\rho \prod_{\alpha \in \Delta_+}(1-e^{-\alpha})^{\mult(\alpha)}. 
\end{equation}
This implies that the Weyl-Kac character formula can be rephrased as follows:
\begin{equation}\label{Weyl-Kac}
\ch L(\Lambda)=\frac{\sum_{w \in W} \vep(w)e^{w(\Lambda+\rho)}}
{\sum_{w \in W} \vep(w)e^{w(\rho)}}.
\end{equation}
%----------------------------------------------------------
\subsection{Normalized characters of affine Lie algebra of type $A_{2l}^{(2)}$}
\label{sect_ch}
For $\Lambda \in P_+$ with $\langle c, \Lambda\rangle=k \in \Z_{\geq 0}$, the 
normalized character $\chi_\Lambda$ of the irreducible highest weight $\fg$-
module $L(\Lambda)$ with highest weight $\Lambda$ is defined by 
\[\chi_{\Lambda}:=e^{-m_\Lambda \delta} \ch L(\Lambda),\] 
where the number $m_\Lambda$,  called the conformal anomaly, is defined by 
\[m_\Lambda:=\frac{I(\Lambda+\rho,\Lambda+\rho)}
 {2(k+h^\vee)}-\frac{I(\rho,\rho)}{2h^\vee},\]
with $h^\vee=2l+1$ being the dual Coxeter number of $A_{2l}^{(2)}$. 
Here, the bilinear form is normalized as in \S \ref{sect_normalized-form} (cf. Chapter 
6 of \cite{Kac}). For $\Lambda \in P_+$ with $\langle c, \Lambda\rangle=k$, set
\[ A_{\Lambda+\rho}:=\sum_{w \in W}\vep(w)e^{w(\Lambda+\rho)-
\frac{I(\Lambda+\rho,\Lambda+\rho)}{2(k+h^\vee)}\delta}.
\]
This is a $W$-skew invariant. 
By \eqref{Weyl-Kac}, one has
\begin{equation}\label{Weyl-Kac-A}
\chi_\Lambda=\frac{A_{\Lambda+\rho}}{A_\rho}.
\end{equation}

We regard this normalized character $\chi_\Lambda$ as a function defined on a 
certain subset of $\fh$ as follows. For $\lambda \in \fh^\ast$, 
$e^\lambda$ can be viewed as a function defined on $\fh$: 
$e^{\lambda}(h):=e^{\langle h, \lambda\rangle}$. It is shown in \cite{GK} that the 
normalized character $\chi_\Lambda$ for $\Lambda \in P_+$ can be viewed as a 
holomorphic function on a complex domain
\[ Y:=\{ h \in \fh \vert \mathrm{Re}\, \langle h, \delta \rangle >0\, \}. \]
We introduce a coordinate system on $\fh$ as follows. Set $\epsilon_i:=\nu^{-1}
(\vep_i)$ for $1\leq i\leq l$, where $\nu: \fh \rightarrow \fh^\ast$ is the isometry
defined in \S \ref{sect_normalized-form}. Then, $\{\epsilon_i\}_{1\leq i\leq l}$ 
is an orthonormal basis of $\fh_f^\ast$ with respect to 
$I^\ast\vert_{\fh_f^\ast \times \fh_f^\ast}$. 
For $h \in Y$, define $z \in \fh_f, \tau, t \in \C$ as in \cite{Kac}: 
\begin{equation}\label{coord-system-I}
h=2\pi \sqrt{-1}\left(z-\tau \nu^{-1}(\Lambda_0)+t\nu^{-1}(\delta)\right)
=2\pi\sqrt{-1}\left(z-\frac{1}{2}\tau d+tc\right), 
 \end{equation}
and write $z=\sum_{i=1}^l z_i \epsilon_i$. With this coordinate system, we see that
\begin{equation}\label{coord-system-II}
 Y \overset{\sim}{\longrightarrow} \nh \times \fh_f \times \C \cong 
 \nh \times \C^l \times \C; \quad h \; \longmapsto \;
(\tau, z, t) \, \mapsto \, (\tau, (z_1,z_2,\cdots, z_l), t), 
\end{equation}
where $\nh$ is the upper half plane 
$\{\,\tau \in \C\, \vert \,\mathrm{Im}(\tau)>0\,\}$.  We write 
$\chi_\Lambda(\tau, z,t):=\chi_\Lambda(h)$. \\

 For $k \in \Z_{>0}$, set
\[ P^k:=\{\lambda \in \fh^\ast \vert \, \langle c, \lambda\rangle =k, \,\, 
\overline{\lambda} \in M\},
\]
and for $\lambda \in P^k$, set
\[ \Theta_\lambda:=e^{-\frac{I(\lambda, \lambda)}{2k}\delta}\sum_{\alpha \in M}
e^{t_\alpha(\lambda)}. \]
This is the \textbf{classical theta function of degree $k$}. 
Its value on $h \in \fh$ is given by
\begin{equation}\label{def_theta-function}
\Theta_\lambda(\tau,z,t)=
e^{2\pi\sqrt{-1}kt}\sum_{\gamma \in M+k^{-1}\overline{\lambda}}
e^{\pi \sqrt{-1}k\tau I(\gamma, \gamma)+2\pi\sqrt{-1}kI(\gamma, \nu (z))}.
\end{equation}
Now, we recall the modular transformations of the classical theta
functions. Recall that 
$SL(2,\Z)$ is the group generated by
\[ S:=\begin{pmatrix} 0 & -1 \\ 1 & 0 \end{pmatrix}, \qquad 
T:=\begin{pmatrix} 1 & 1 \\ 0 & 1 \end{pmatrix}. \]
As $SL(2,\Z)$ acts on the complex domain $Y$ by 
\[ \begin{pmatrix} a & b \\ c & d \end{pmatrix}.(\tau, z, t)
=\left(\frac{a\tau+b}{c\tau+d}, \frac{z}{c\tau+d}, t-\frac{cI^\ast(z,z)}{2(c\tau+d)}
\right), \]
This action induces a right action of $SL(2,\Z)$ on $\cO_Y$ 
as follows: for $g \in SL(2,\Z)$ and $F \in \cO_Y$, 
\[ F\vert_g (\tau, z,t):=F(g.(\tau, z, t)). \]
The next formula is a simple application of the Poisson resummation 
formula:
\begin{prop}[cf. Theorem 13.5 in \cite{Kac}] Let $\lambda \in P^k$. One has
\begin{align*}
\Theta_\lambda\left(-\frac{1}{\tau}, \frac{z}{\tau}, t-\frac{I^\ast(z,z)}{2\tau}\right)=
&\left(\frac{\tau}{\sqrt{-1}}\right)^{\frac{1}{2}l}k^{-\frac{1}{2}l}
\sum_{\mu \in P^k \mod \C \delta+kM}e^{-\frac{2\pi\sqrt{-1}I(\overline{\lambda},
\overline{\mu})}{k}}\Theta_\mu(\tau,z,t), \\
\Theta_{\lambda}(\tau+2,z,t)=
&e^{\frac{2\pi\sqrt{-1}}{k} I(\overline{\lambda}, \overline{\lambda})}
\Theta_\lambda(\tau,z,t).
\end{align*}
In particular, when $k$ is even, one also has
\[ \Theta_\lambda(\tau+1,z,t)=
e^{\frac{\pi\sqrt{-1}}{k} I(\overline{\lambda}, \overline{\lambda})}
\Theta_\lambda(\tau,z,t).
\]
\end{prop}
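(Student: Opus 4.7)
The plan is to verify each of the three transformation formulas by direct manipulation of the series
\eqref{def_theta-function}, with Poisson summation carrying the essential analytic weight in the $S$-transformation. Throughout, the key algebraic input is that $M=\bigoplus_{i=1}^l\Z\vep_i$ is $I$-integral (so $I(\alpha,\beta)\in\Z$ for $\alpha,\beta\in M$) and, as an \emph{orthonormal} lattice with respect to $I$, is self-dual of covolume $1$; the oddness of $M$ only manifests through values $I(\alpha,\alpha)$ that need not be even.

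For the $T^2$ and $T$ identities I would substitute $\tau\mapsto\tau+2$ (resp.\ $\tau+1$) in \eqref{def_theta-function} and expand, for $\gamma=\alpha+k^{-1}\overline\lambda$ with $\alpha\in M$,
\[
kI(\gamma,\gamma)=kI(\alpha,\alpha)+2I(\alpha,\overline\lambda)+k^{-1}I(\overline\lambda,\overline\lambda).
\]
The middle term lies in $2\Z$ because $\overline\lambda\in M$, so $e^{2\pi i \cdot 2 I(\alpha,\overline\lambda)}=1$. For $T^2$ the first term contributes $e^{2\pi i\cdot 2kI(\alpha,\alpha)}=1$ unconditionally, leaving the desired factor $e^{2\pi i I(\overline\lambda,\overline\lambda)/k}$. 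For $T$, one instead needs $e^{\pi i k I(\alpha,\alpha)}=1$ for every $\alpha\in M$; this holds iff $kI(\alpha,\alpha)$ is always even, which, since $M$ is odd, forces $k$ to be even, explaining the extra hypothesis.

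For the $S$-identity I would first apply the substitution $(\tau,z,t)\mapsto(-1/\tau,\,z/\tau,\,t-I^{\ast}(z,z)/(2\tau))$ to \eqref{def_theta-function} and complete the square using the isometry $I(\nu(z),\nu(z))=I^{\ast}(z,z)$, obtaining
\[
\Theta_\lambda\!\bigl(-\tfrac1\tau,\tfrac z\tau,t-\tfrac{I^{\ast}(z,z)}{2\tau}\bigr)
=e^{2\pi i kt}\sum_{\gamma\in M+k^{-1}\overline\lambda}
\exp\!\Bigl(-\tfrac{\pi i k}{\tau}I\bigl(\gamma-\nu(z),\gamma-\nu(z)\bigr)\Bigr).
\]
I would then put $c=k^{-1}\overline\lambda-\nu(z)$, write the sum as $\sum_{\alpha\in M}g(\alpha+c)$ with $g(y)=\exp(-\pi s I(y,y))$ and $s=ik/\tau$ (which has positive real part for $\tau\in\nh$), and invoke Poisson summation on $M$. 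Since $M$ is $I$-self-dual of covolume $1$ and the Fourier transform of $g$ equals $s^{-l/2}e^{-\pi I(\beta,\beta)/s}$ (with the branch fixed by analytic continuation from $\tau\in i\R_{>0}$), this yields the prefactor $(\tau/\sqrt{-1})^{l/2}k^{-l/2}$ and a dual sum over $M$.

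Finally, I would reorganise $\sum_{\beta\in M}$ by the coset decomposition $M=\bigsqcup_{\overline\mu\in M/kM}(\overline\mu+kM)$: the character $e^{2\pi i I(c,\beta)}$ splits as $e^{-2\pi i I(\nu(z),\beta)}\cdot e^{2\pi i I(\overline\lambda,\overline\mu)/k}$, the $\overline\lambda$-dependent phase being constant on each coset because $I(\overline\lambda,k\eta)\in k\Z$ for $\eta\in M$. A change of summation variable $\beta\mapsto-\beta$ (permissible since $M=-M$) repackages each inner sum into $e^{-2\pi i kt}\Theta_\mu(\tau,z,t)$, and replaces the phase by $e^{-2\pi i I(\overline\lambda,\overline\mu)/k}$, matching the statement. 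The main technical obstacle is purely bookkeeping: correctly tracking the branch of $s^{-l/2}$ so that it reads $(\tau/\sqrt{-1})^{l/2}k^{-l/2}$, and confirming that the reindexing $\beta\mapsto\mu\in P^k\!\mod(\C\delta+kM)$ is well-defined — both issues that are benign precisely because $M$ is an \emph{orthonormal} $\Z$-lattice for $I$.
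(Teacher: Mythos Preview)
Your proposal is correct and follows exactly the approach the paper indicates: the paper does not give a proof but merely states that ``the next formula is a simple application of the Poisson resummation formula'' and refers to Theorem~13.5 in \cite{Kac}. Your write-up carries out precisely that Poisson-summation argument, with the $T$ and $T^2$ cases handled by the elementary parity analysis you describe; the only feature specific to $A_{2l}^{(2)}$ --- that $M$ is odd, forcing the hypothesis $k$ even for the $T$-formula --- is also the point the paper singles out in the sentence following the proposition.
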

In fact, the lattice $M$ is not an even lattice. \\

For $k \in \Z_{\geq 0}$, let $P_+^k:=P_+ \cap P^k$ be the set of 
dominant integral weights of level $k$.  As an application of the above proposition, 
V. Kac and D. Peterson \cite{KP} proved that the $\C$-span of 
$\{A_{\Lambda+\rho}\}_{\Lambda \in P_+^k \, \text{mod}\, \C \delta}$ admits 
an action of a certain subgroup of $SL(2,\Z)$, so does 
the $\C$-span of the normalized characters 
$\{\chi_\lambda(\tau,z,t)\}_{\lambda \in P_k^+ \mod \C \delta}$.  
\begin{thm}[\cite{KP}]\label{thm_KP} 
Let $\fg$ be the affine Lie algebra of type $A_{2l}^{(2)}$. 
For $\lambda \in P_+^{\,k}$\, $( k \in \Z_{>0} )$, one has
\[ \chi_\lambda\left( -\frac{1}{\tau}, \frac{z}{\tau}, 
t-\frac{I^{\ast}(z,z)}{2\tau}\right)
=\sum_{\mu \in P_+^{\,k}\mod \C \delta} a(\lambda, \mu)\chi_\mu(\tau, z,t), \]
where the matrix $(a(\lambda, \mu))_{\lambda, \mu \in P_+^{\,k}\mod \C \delta}$ 
is given by
\begin{align*}
a(\lambda,\mu) &=
(\sqrt{-1})^{l^2}(k+2l+1)^{-\frac{1}{2}l}\\
&\qquad \times\sum_{w \in \fS_l \ltimes (\Z/2\Z)^l}\vep(w)
\exp\left(-\frac{2\pi\sqrt{-1}
I(\overline{\lambda+\rho}, w(\overline{\mu+\rho}))}{k+2l+1}\right). 
\end{align*}
One also has
\[ \chi_\lambda(\tau+2,z,t)=e^{4\pi\sqrt{-1}m_{\lambda}}
\chi_\lambda(\tau,z,t).
\]
\end{thm}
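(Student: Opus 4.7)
The plan is to reduce both transformation formulas for the normalized character $\chi_\Lambda = A_{\Lambda+\rho}/A_\rho$ to the modular transformation of classical theta functions given in the preceding Proposition, exploiting the decomposition $W = W_f \ltimes M$ from \S\ref{sect_affine-Weyl}. The crucial preliminary identity is
\begin{equation*}
A_{\Lambda+\rho} \;=\; \sum_{w \in W_f} \vep(w)\, \Theta_{w(\Lambda+\rho)},
\end{equation*}
with $\Theta_{w(\Lambda+\rho)}$ regarded as a classical theta function at level $k+h^\vee$. Writing each $w \in W$ uniquely as $w_f \cdot t_\alpha$ with $w_f \in W_f$, $\alpha \in M$ and using the explicit formula for $t_\alpha$ from \S\ref{sect_affine-Weyl}, the $M$-sum inside $A_{\Lambda+\rho}$ reproduces $\Theta_{w_f(\Lambda+\rho)}$ after reindexing via the intertwining $t_\alpha w_f = w_f t_{w_f^{-1}(\alpha)}$; the $W$-invariance of $I$ makes the $\delta$-prefactor compatible.

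For the $S$-transformation, apply the first identity of the Proposition to each $\Theta_{w(\Lambda+\rho)}$ at level $k+h^\vee$, then use the $W_f$-invariance of $I$ on $\fh_f^\ast$ to bring $w$ onto the dummy variable $\mu$. The resulting inner $W_f$-sum is skew-invariant in $\mu$, so choosing a fundamental domain for the $W_f$-action on $P^{\,k+h^\vee}/(\C\delta+(k+h^\vee)M)$ consisting of classes $\mu+\rho$ with $\mu \in P_+^{\,k}$, and folding the orbit sums back into $A_{\mu+\rho}$ via the preliminary identity, yields a relation of the shape
\begin{equation*}
A_{\Lambda+\rho}(S\cdot(\tau,z,t)) \;=\; \left(\frac{\tau}{\sqrt{-1}}\right)^{l/2}\sum_{\mu \in P_+^{\,k}\mod \C\delta} \tilde{a}(\Lambda,\mu)\, A_{\mu+\rho}(\tau,z,t).
\end{equation*}
Specializing to $\Lambda=0$ handles $A_\rho$; the overall $(\tau/\sqrt{-1})^{l/2}$ cancels in the ratio, and assembling the numerical constants produces the matrix coefficient $a(\lambda,\mu)$ in the stated form.

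The $T^2$-transformation is more direct. Since $W_f$ preserves $I$ on $\fh_f^\ast$, the quantity $I(\overline{w(\Lambda+\rho)},\overline{w(\Lambda+\rho)})$ equals $I(\overline{\Lambda+\rho},\overline{\Lambda+\rho})$ for every $w \in W_f$, so applying the second identity of the Proposition term by term gives
\begin{equation*}
A_{\Lambda+\rho}(\tau+2,z,t)\;=\;\exp\!\left(\frac{2\pi\sqrt{-1}}{k+h^\vee}\,I(\overline{\Lambda+\rho},\overline{\Lambda+\rho})\right)A_{\Lambda+\rho}(\tau,z,t),
\end{equation*}
and analogously for $A_\rho$ at level $h^\vee$. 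With the canonical representative $\langle d,\Lambda\rangle = 0$, one has $I(\Lambda+\rho,\Lambda+\rho) = I(\overline{\Lambda+\rho},\overline{\Lambda+\rho})$ and similarly for $\rho$, so taking the ratio reproduces $e^{4\pi\sqrt{-1}m_\Lambda}$ by the definition of the conformal anomaly.

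The main obstacle is the combinatorial reorganization in the $S$-transformation: one must pin down the fundamental domain for $W_f$ on $P^{\,k+h^\vee}/(\C\delta+(k+h^\vee)M)$ that naturally corresponds to $\{\mu+\rho \mid \mu \in P_+^{\,k}\mod \C\delta\}$ (handling wall stabilizers), and then carefully track the cumulative signs and numerical constants --- in particular the $(\sqrt{-1})^{l^2}$ prefactor --- coming from the $\vep(w)$-conventions on $W_f \cong \fS_l \ltimes (\Z/2\Z)^l$ and the explicit evaluation of the denominator. A subtle point, emphasized at the close of \S\ref{sect_affine-Weyl}, is that $M$ is an \emph{odd} lattice; this is precisely why the statement concerns $T^2$ rather than $T$, since $\Theta_\lambda(\tau+1,\cdot,\cdot)$ does not admit a clean transformation when $k+h^\vee$ is odd.
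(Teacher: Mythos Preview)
The paper does not supply its own proof of this theorem: it is stated with attribution to Kac--Peterson \cite{KP}, and the surrounding text merely indicates that it follows ``as an application of the above proposition'' (the modular transformation of the classical theta functions $\Theta_\lambda$). Your sketch is precisely that application --- expressing $A_{\Lambda+\rho}$ as the $W_f$-alternating sum $\sum_{w\in W_f}\vep(w)\Theta_{w(\Lambda+\rho)}$ at level $k+h^\vee$, applying the Proposition termwise, and then folding back over a fundamental domain for $W_f$ acting on $P^{\,k+h^\vee}\bmod(\C\delta+(k+h^\vee)M)$ --- which is exactly the argument in \cite{KP} (and in Chapter 13 of \cite{Kac}). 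So your approach is correct and coincides with the one the paper points to; there is nothing further to compare.
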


\begin{rem}\label{rem_modular-denom}
For later purpose, we recall the modular transformations of the denominator: 
\begin{align*}
A_\rho\left(-\frac{1}{\tau}, \frac{z}{\tau},
t-\frac{I^{\ast}(z,z)}{2\tau}\right)= 
&\left(\frac{\tau}{\sqrt{-1}}\right)^{\frac{1}{2}l}(\sqrt{-1})^{-l^2}A_\rho(\tau,z,t), \\
A_\rho(\tau+2,z,t)=
&\exp\left( \frac{l(l+1)}{3}\pi\sqrt{-1}\right)A_\rho(\tau,z,t).
\end{align*}
\end{rem}

\begin{rem} \label{rem_fin-Weyl}
It can be checked that, for $w=\sigma \eta \in W_f$ \; $(\sigma \in \fS_l, 
\eta=(\eta_1, \cdots, \eta_l) \in \{\pm 1\}^l)$,  one has 
$\det{}_{\fh_f^\ast}(w)=\mathrm{sgn}(\sigma)\prod_{i=1}^l \eta_i$. 
\end{rem}

Notice that, for each $k \in \Z_{\geq 0}$,  the $\C$-span of the normalized 
characters $\chi_\lambda\, (\lambda \in P_+^k \mod \C \delta)$ is 
$\Gamma_\theta$-stable,  where $\Gamma_\theta$ is the subgroup of $SL(2,\Z)$ 
generated by $S$ and $T^2$.\\

In the following, we recall an expansion of $\chi_{\Lambda}$ in terms of the 
classical theta functions. An element $\lambda\in \cP(L(\Lambda))$ is called maximal 
if $\lambda+\delta\not\in \cP(L(\Lambda))$, and let $\mbox{max}(\Lambda)$ be the 
set of all maximal elements of $\cP(L(\Lambda))$. Hence, we have a decomposition of
$\cP(L(\Lambda))$:
\[ \cP(L(\Lambda))=\bigsqcup_{\lambda\in \mbox{\scriptsize max}(\Lambda)}
\{\lambda-n\delta\,|\,n\in \Z_{\geq 0}\}.\]
For $\lambda\in \mbox{max}(\Lambda)$, set
\[c^{\Lambda}_{\lambda}:=e^{-m_{\Lambda}(\lambda)\delta}
\sum_{n=0}^{\infty}\bigl(\dim_{\C}L(\Lambda)_{\lambda-n\delta}\bigr)e^{-n\delta},
\quad
\mbox{where }m_{\Lambda}(\lambda):=m_{\Lambda}-\frac{I(\lambda,\lambda)}{2k}.
\]
Furthermore, we extend the definition of $c^{\Lambda}_{\lambda}$ to an arbitrary 
$\lambda\in \fh^{\ast}$ as follows. If 
$(\lambda+\C\delta)\cap \mbox{max}(\Lambda)=\emptyset$, we set 
$c^{\Lambda}_{\lambda}:=0$. Otherwise, there exists a unique 
$\mu\in \mbox{max}(\Lambda)$ such that $\lambda-\mu\in \C{\delta}$. Hence, 
we set $c_{\lambda}^{\Lambda}:=c_{\mu}^{\Lambda}$. 

Similarly to the case of classical theta functions, we regard the series 
$c_{\lambda}^{\Lambda}$ as a (formal) function on $Y$. 
By the definition, this function depends only on the variable 
$\tau\in \mathbb{H}$. The following proposition is well-known in the 
representation theory of affine Lie algebras. For example, see \cite{Kac},
 in detail. 
\begin{prop}[\cite{Kac}]\label{prop:ch}
Let $k>0$ be a positive integer and $\Lambda\in P^k_+$.
\begin{enumerate}
\item[\rm (i)] The series $c_{\lambda}^{\Lambda}=c_{\lambda}^{\Lambda}(\tau)$ 
converges absolutely on the upper half plane $\mathbb{H}$ to a holomorphic 
function.
\item[\rm (ii)] The normalized character $\chi_{\Lambda}$ has the following 
expansion in terms of the classical theta functions{\rm :}
\end{enumerate}
\[
\chi_{\Lambda}=\sum_{\lambda \in P^k\,\mbox{\rm \scriptsize mod}\,(kM+\C\delta)}
c_{\lambda}^{\Lambda}\Theta_{\lambda}. 
\]
\end{prop}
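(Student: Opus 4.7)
My plan is to expand $\chi_\Lambda$ first along the $\delta$-strings indexed by $\mathrm{max}(\Lambda)$, then group these strings into $M$-translation orbits, using that $M \hookrightarrow W$ via $\alpha\mapsto t_\alpha$ and that the weight multiplicities of $L(\Lambda)$ are $W$-invariant.

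For (ii), I would start from
\[
\ch L(\Lambda) = \sum_{\lambda \in \mathrm{max}(\Lambda)} e^\lambda \sum_{n \geq 0} \bigl(\dim_{\C} L(\Lambda)_{\lambda - n\delta}\bigr)\, e^{-n\delta}
\]
and multiply by $e^{-m_\Lambda\delta}$; using $m_\Lambda(\lambda) = m_\Lambda - I(\lambda,\lambda)/(2k)$, this collapses to
\[
\chi_\Lambda = \sum_{\lambda \in \mathrm{max}(\Lambda)} e^{\lambda - \frac{I(\lambda,\lambda)}{2k}\delta}\, c^\Lambda_\lambda.
\]
Since $W$ fixes $\delta$ and preserves weight multiplicities, it permutes $\mathrm{max}(\Lambda)$ and leaves both $c^\Lambda_\lambda$ and $I(\lambda,\lambda)$ invariant. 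Regrouping the sum by $M$-orbits, each orbit $\{t_\alpha(\lambda)\}_{\alpha\in M}$ contributes
\[
c^\Lambda_\lambda \cdot e^{-\frac{I(\lambda,\lambda)}{2k}\delta} \sum_{\alpha \in M} e^{t_\alpha(\lambda)} = c^\Lambda_\lambda\, \Theta_\lambda.
\]
It then remains to check that $\mathrm{max}(\Lambda)\subset P^k$ and that the $M$-orbits in $\mathrm{max}(\Lambda)$ correspond bijectively to those classes in $P^k/(kM+\C\delta)$ which meet $\mathrm{max}(\Lambda)$. The first uses $\langle c,\alpha_i\rangle = 0$ together with $\overline{\alpha_0} = -\vep_1 \in M$, $\overline{\alpha_i}\in M$ for $i\geq 1$, and $\overline{\Lambda} = \sum_i m_i \varpi_i \in M$. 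The second follows from $t_\alpha(\lambda) - \lambda \in kM + \C\delta$ combined with the fact that distinct maximal weights lie in distinct $\delta$-strings. On classes not meeting $\mathrm{max}(\Lambda)$, $c^\Lambda_\lambda=0$ by definition, so the sum extends harmlessly to all of $P^k/(kM+\C\delta)$.

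For (i), I would dominate the string by that of the Verma module: $\dim_{\C} L(\Lambda)_{\lambda-n\delta} \leq \dim_{\C} M(\Lambda)_{\lambda-n\delta}$. The product formula for $\ch M(\Lambda)$, restricted to a fixed $\delta$-string and evaluated in the coordinates $(\tau, z, t)$ on $Y$, becomes a $q$-series in $q=e^{2\pi\sqrt{-1}\tau}$ that converges absolutely on $|q|<1$; hence $c^\Lambda_\lambda(\tau)$ is absolutely convergent and holomorphic on $\nh$. The main subtlety throughout is the combinatorial identification of $\mathrm{max}(\Lambda)/M$ with a subset of $P^k/(kM+\C\delta)$; the odd nature of $M$, which will become essential later for the modular transformations, plays no role in this decomposition, which rests only on the $W$-invariance of multiplicities and the embedding $M\subset W$.
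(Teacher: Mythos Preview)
The paper does not give its own proof of this proposition; it is quoted from \cite{Kac} with the remark ``For example, see \cite{Kac}, in detail.'' Your outline is essentially the standard argument from Kac's book (Chapter~12): decompose $\ch L(\Lambda)$ into $\delta$-strings, use that $M\subset W$ acts on $\mathrm{max}(\Lambda)$ preserving multiplicities, and regroup into $M$-orbits to recover the classical theta functions. The verifications you sketch (that $\overline{\alpha_i}\in M$ for all $i$, that $\overline{\Lambda}\in M$, that $M$ acts freely on $\mathrm{max}(\Lambda)$ because $k>0$, and that two maximal weights in the same $(kM+\C\delta)$-class are $M$-translates of one another via $\Z\Pi\cap\C\delta=\Z\delta$) are all correct. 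For (i), your Verma-domination argument is a legitimate alternative to the route in \cite{Kac}, where convergence of the string functions is instead read off from the already-established holomorphy of $\chi_\Lambda$ on $Y$ (the result of \cite{GK} that this paper also invokes); either approach works, and yours is slightly more self-contained.
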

The holomorphic function $c_{\lambda}^{\Lambda}(\tau)$ is called the 
string function of $\lambda\in \fh^{\ast}$. 

%-----------------------------------------------------
\subsection{Ring of Theta functions}
For $\alpha \in \fh_{f,\R}^\ast$, we define $p_\alpha \in \End_\C(\fh^\ast)$ by
\[ p_\alpha(h)=h+2\pi\sqrt{-1}\nu^{-1}(\alpha). \]
Set 
\[ N=\fh_{f,\R}^\ast \times \fh_{f,\R}^\ast \times \sqrt{-1}\R \]
and define a group structure on $N$ by
\[ (\alpha, \beta, u) \cdot (\alpha', \beta', u')=(\alpha+\alpha', \beta+\beta', u+u'+
\pi\sqrt{-1}\{I(\alpha, \beta')-I(\alpha',\beta)\}).
\]
The group $N$ is called \textbf{Heisenberg group}. This group acts on $\fh$ by
\[ (\alpha, \beta, u).h=t_\beta(h)+2\pi\sqrt{-1}\nu^{-1}(\alpha)+\{u-\pi\sqrt{-1}
I(\alpha,\beta)\}c. 
\]
This $N$ action preserves the complex domain $Y$. We note that 
$(\alpha, 0,0).h=p_\alpha(h)$ and $(0,\beta,0).h=t_\beta(h)$. We consider the 
subgroup $N_\Z$ of $N$ generated by $(\alpha, 0, 0), (0, \beta, 0)$ with 
$\alpha, \beta \in M$ and $(0, 0, u)$ with $u \in 2\pi \sqrt{-1}\Z$, i.e., 
\[ N_\Z=\{ (\alpha, \beta, u) \in N \, \vert \, \alpha, \beta \in M, \, u+\pi\sqrt{-1}
I(\alpha, \beta) \in 2\pi\sqrt{-1}\Z\, \}.
\]

Let $\cO_Y$ be the ring of holomorphic functions on $Y$. We define the right 
$N$-action on $\cO_Y$ by
\[ \left. F\right\vert_{(\alpha, \beta, u)}(h)=F((\alpha, \beta, u).h) \qquad 
F \in \cO_Y. \]

\begin{Def}
For $k \in \Z_{\geq 0}$, define
\[ \tTh_k:=\left\{ F \in \cO_Y \left\vert \; \begin{matrix} 
(1) 
&\; \left. F\right\vert_{(\alpha, \beta, u)}=F \quad \forall\, (\alpha, \beta, u) 
\in N_\Z, \\
(2)
&\; F(h+ac)=e^{ka}F(h) \quad \forall\, h \in Y \; \text{and} \; a \in \C. 
\end{matrix}
\right\},  \right.
\]
and set
\[ \tTh:=\bigoplus_{k=0}^\infty \tTh_k. \]
An element $F$ of $\tTh_k$ is called a theta function of degree $k$.
\end{Def}
\begin{rem} 
The ring $\tTh$ is a $\Z$-graded algebra over the ring $\tTh_0=\cO_{\nh}$, 
the ring of holomorphic functions on the upper half plane $\nh$.
\end{rem}
A typical example of an element of $\tTh_k$ for $k \in \Z_{>0}$ is the classical theta 
function $\Theta_\lambda$ of degree $k$, i.e., $\lambda \in P^k$. 

\begin{rem}
It follows from \eqref{def_theta-function} that
\[ 
\Theta_{\lambda+k\alpha+a\delta}=\Theta_{\lambda}\quad 
\mbox{for every $\alpha\in M$ and $a\in\C$}.
\]
Thus,  a classical theta function of degree $k$ depends only on 
the finite set $P^k\,\mbox{mod}\,(kM+\C\delta)$.  
\end{rem}
Let $D$ be the Laplacian on $Y$: 
\[ D=\frac{1}{4\pi^2}\left(2\frac{\partial}{\partial t}\frac{\partial}{\partial \tau}
-\sum_{i=1}^l \frac{\partial^2}{\partial z_i^2}\right). 
\]
For $k \in \Z_{\geq 0}$, set
\[  Th_k:=\begin{cases}  \; \{F \in \tTh_k\, \vert D(F)=0\, \} \qquad & k>0, \\
                                    \;  \C \qquad & k=0. \end{cases}
\]
V. Kac and D. Peterson showed the following proposition.
\begin{prop}[\cite{KP}]\label{prop_base-theta} Let $k$ 
be a positive integer. 
\begin{enumerate}
\item[(i)] The set $\{ \Theta_\lambda \vert \lambda \in P^k \; \text{mod}\; kM+\C 
\delta \, \}$ is a $\C$-basis of $Th_k$. 
\item[(ii)] The map: $\cO_\nh \otimes_{\C} Th_k \rightarrow \tTh_k$ defined by 
$f \otimes F \mapsto fF$ is an isomorphism of $\cO_\nh$-modules. In other words, 
$\tTh_k$ is a free $\cO_{\nh}$-module with basis 
$\{\Theta_\lambda\,  \vert \, 
\lambda \in P^k \, \mathrm{mod}\,  (kM +\C \delta) \, \}$.
\end{enumerate}
\end{prop}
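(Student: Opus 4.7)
The plan is to prove both parts via Fourier analysis on the domain $Y$. The four conditions defining $Th_k\subset\tTh_k$, namely (a) the prefactor rule $F(h+ac)=e^{ka}F(h)$, (b) $M$-periodicity in $z$ from $p_\alpha$-invariance, (c) quasi-invariance under $t_\beta$ for $\beta\in M$, and (d) harmonicity $D(F)=0$, are so restrictive that, step by step, they force any $F$ to be a $\C$-linear combination of the $\Theta_\lambda$.

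First I would verify that each $\Theta_\lambda$ with $\lambda\in P^k$ lies in $Th_k$. Absolute convergence of \eqref{def_theta-function} on $Y$ follows from $\operatorname{Im}(\tau)>0$ via the Gaussian weight $e^{-\pi k\operatorname{Im}(\tau)I(\gamma,\gamma)}$; the prefactor condition (a) is the explicit factor $e^{2\pi\sqrt{-1}kt}$; $N_\Z$-invariance decomposes into a reindexing of the summation lattice $M+k^{-1}\overline{\lambda}$ under $p_\alpha$, the identity $e^{2\pi\sqrt{-1}km}=1$ for the central generator, and a cocycle computation for $t_\beta$ in which the $t$-cocycle $-\pi\sqrt{-1}kI(\beta,\beta)$ cancels against the change in the Gaussian exponent after reindexing $\gamma\mapsto\gamma-\beta$; and harmonicity $D(\Theta_\lambda)=0$ is verified term-by-term. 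The well-definedness of $\Theta_\lambda$ modulo $kM+\C\delta$ is an elementary check from the explicit formula.

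For the linear independence in (i), the key observation is that the non-zero $z$-Fourier exponents of $\Theta_\lambda$ lie precisely on the coset $\overline{\lambda}+kM\subset M$, so distinct classes of $\lambda$ modulo $kM+\C\delta$ yield disjoint Fourier supports and the $\{\Theta_\lambda\}$ are $\C$- (indeed $\cO_\nh$-)linearly independent. For spanning, given $F\in Th_k$, condition (a) forces $F=e^{2\pi\sqrt{-1}kt}G(\tau,z)$ for some holomorphic $G$; periodicity (b) gives an absolutely convergent expansion $G(\tau,z)=\sum_{\mu\in M}g_\mu(\tau)e^{2\pi\sqrt{-1}I(\mu,\nu(z))}$; quasi-invariance (c) relates $g_\mu$ and $g_{\mu+k\beta}$ via an explicit Gaussian cocycle, so the coefficients within each coset of $kM$ are determined by a single representative; finally, harmonicity (d) forces $g_\mu(\tau)=c_\mu\cdot e^{\pi\sqrt{-1}k^{-1}\tau I(\mu,\mu)}$ for constants $c_\mu\in\C$. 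Reassembling then gives $F=\sum c_\mu\Theta_\mu$, the sum running over $\mu\in M/kM$, which completes (i). For (ii), dropping (d) lets the $c_\mu$ become arbitrary holomorphic functions of $\tau$, yielding $F=\sum_\lambda f_\lambda(\tau)\Theta_\lambda$ with $f_\lambda\in\cO_\nh$; surjectivity follows immediately and injectivity is the same disjoint-support argument.

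The main obstacle I foresee is step (c): matching the explicit cocycle of $t_\beta\in N$ acting on holomorphic functions on $Y$ (which couples the $t$-coordinate with a Gaussian factor in $\beta$) to the cocycle implicit in the passage $\mu\mapsto\mu+k\beta$ inside a classical theta function. All signs, factors of $\pi\sqrt{-1}$, and normalisations by $k$ must align exactly; once they do, the rest of the argument is essentially formal Fourier analysis on a product domain.
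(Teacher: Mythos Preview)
The paper does not supply its own proof of this proposition; it is quoted from \cite{KP} (the argument is also in \cite{Kac}, Chapter~13). Your outline is correct and coincides with the standard proof given there: the central condition isolates the factor $e^{2\pi\sqrt{-1}kt}$, $M$-periodicity in $z$ yields a Fourier expansion indexed by $M$, the heat equation $D(F)=0$ fixes each Fourier coefficient as a function of $\tau$ up to a scalar, and $t_\beta$-quasi-invariance then identifies these scalars within each coset of $kM$. The cocycle check you flag in step~(c) is indeed the only delicate point, and it does work out; so there is nothing to compare and nothing to correct.
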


As the finite Weyl group $W_f$ acts on $Y$, it induces the right action of $W_f$ 
on $\cO_Y$: 
\[F\vert_w(h):=F(w\cdot h)\quad \mbox{for $F \in \cO_Y$ and $w \in W_f$. }\]

For any $k \in \Z_{\geq 0}$, this right $W_f$-action on $\cO_Y$ restricts to a right 
$W_f$-action on $\tTh_k$. Thus, we set
\begin{align*}
\tTh_k^+:=& \{ F \in \tTh_k \, \vert \, F\vert_w=F \quad \forall \, w \in W_f\}, \\
\tTh_k^-:=
& \{ F \in \tTh_k \, \vert \, F\vert_w=\det{}_{\fh_f^\ast}(w)F 
\quad \forall \, w \in W_f \}, \\
Th_k^\pm:=&\tTh_k^\pm \cap Th_k.
\end{align*}
An element of $\tTh^\pm:=\bigoplus_{k \in \Z_{\geq 0}} \tTh_k^\pm$ is called a 
$W_f$-invariant (resp. $W_f$-anti-invariant). For $k \in \Z_{>0}$, set
\[ P_{++}^k:=\{ \Lambda \in P^k\, \vert \, \langle h_i, \Lambda\rangle \in \Z_{>0}\}. \]

Let $\Lambda\in P^k$. Obviously, the element $A_{\Lambda+\rho}$ introduced in
\S \ref{sect_ch} is a $W_f$-anti-invariant. On the other hand, set 
\[S_{\Lambda}
:=e^{-\frac{I(\Lambda,\Lambda)}{2k}\delta}\sum_{w\in W}e^{w(\lambda)}=
\sum_{w_f\in W_f}\Theta_{w_f(\Lambda)}
.\] 
Then, it is an element of $Th_k^+$. 

Another important example of $W_f$-invariants is the normalized character 
$\chi_{\Lambda}$. Indeed, Proposition \ref{prop:ch} (ii)
tells us that $\chi_{\Lambda}$ is an element of $\tTh_k$. In addition, thanks to
the description \eqref{Weyl-Kac-A} of $\chi_{\Lambda}$, it is invariant under the 
action of $W_f$. Therefore, one has $\chi_{\Lambda}\in \tTh_k^+$.
\begin{prop}[\cite{KP}] 
Let $k$ be a non-negative integer.
\begin{enumerate}
\item[(i)] The set $\{ S_{\Lambda} \vert \, \Lambda \in P_+^k \, 
\text{mod}\,  \C \delta\,  \}$
is a $\C$-basis of $Th_k^+$. 
\item[(ii)] For $k\geq 2l+1$, the set $\{A_\lambda \vert \, \lambda \in P_{++}^k\, 
\text{mod}\, \C \delta\, \}$ is a $\C$-basis of $Th_k^-$.
\item[(iii)] The map $\Phi_k$ in Proposition \ref{prop_base-theta} is 
$W_f$-equivariant. Therefore, the $\cO_\nh$-modules $\tTh_k^\pm$ are free 
over $\{ \chi_\Lambda \vert \, \Lambda \in P_+^k \, \text{mod}\,  \C \delta\,  \}$ 
$($resp. $\{A_\lambda \vert \, \lambda \in P_{++}^k\, \text{mod}\, \C \delta\, \}$$)$. 
\end{enumerate}
\end{prop}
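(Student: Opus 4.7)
The plan is to reduce all three assertions to combinatorics on the finite set $P^k/(kM+\C\delta)$. Since $W_f$ fixes $c$, $d$, and $\delta$ and preserves the lattice $M$, it permutes the basis $\{\Theta_\lambda\}$ of $Th_k$ via $\Theta_\lambda \mapsto \Theta_{w\lambda}$. I will determine the orbit structure from the decomposition $W = W_f \ltimes M$ combined with the identity $t_\alpha(\lambda) \equiv \lambda \pmod{kM+\C\delta}$ for $\lambda \in P^k$, which says that $M$ acts trivially on the quotient. Consequently, the $W_f$-orbits on $P^k/(kM+\C\delta)$ coincide with the $W$-orbits on $P^k/\C\delta$, which have unique dominant representatives in $P_+^k/\C\delta$. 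For part (i), $S_\Lambda$ is (up to the factor $|\mathrm{Stab}_{W_f}(\Lambda)|$) the corresponding orbit sum, so the assertion follows.

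For part (ii), I will first verify the identity $A_\Lambda = \sum_{w\in W_f} \vep(w)\Theta_{w\Lambda}$ by expanding the $\Theta$-sum over $M$, substituting $\beta = w^{-1}\alpha$, and using $\vep(t_\beta) = 1$. The standard anti-symmetrization argument then gives $A_\Lambda \neq 0$ iff the $W_f$-stabilizer of $[\Lambda] \in P^k/(kM+\C\delta)$ contains no reflection. I claim this non-vanishing condition is equivalent to $\Lambda$ being $W$-conjugate mod $\C\delta$ to an element of $P_{++}^k$. One direction uses the estimate $\sum_{i=1}^l \langle h_i, \Lambda\rangle < k/2$ valid for $\Lambda \in P_{++}^k$ to exclude each $W_f$-reflection from the stabilizer of $[\Lambda]$. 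For the reverse, if the dominant representative $\Lambda$ has $\langle h_i, \Lambda\rangle = 0$, then for $i \geq 1$ the reflection $r_{\alpha_i} \in W_f$ directly stabilizes $[\Lambda]$, while for $i = 0$ I will use the decomposition $r_{\alpha_0} = r_\theta\, t_{-\vep_1}$ (read off from the calculation of $r_{\alpha_0} r_\theta$ in \S \ref{sect_affine-Weyl}) to show that $r_\theta \Lambda - \Lambda = -\langle \theta^\vee, \Lambda\rangle \cdot 2\vep_1$ lies in $kM$ exactly when $\langle h_0, \Lambda\rangle = 0$, so that $r_\theta \in W_f$ stabilizes $[\Lambda]$. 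The bound $k \geq 2l+1$ simply encodes the threshold $P_{++}^k \neq \emptyset$, since the minimum level of a strictly dominant weight is $\langle c, \rho\rangle = 2l+1$.

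For part (iii), the $W_f$-action on $Y$ fixes the coordinate $\tau$ because $W_f$ acts only through $\fh_f$, so it is trivial on $\cO_\nh$ and $\Phi_k$ is $W_f$-equivariant. Taking $W_f$-(anti-)invariants commutes with tensoring over $\cO_\nh$, which gives $\tTh_k^\pm \cong \cO_\nh \otimes_\C Th_k^\pm$ and yields immediately the $\cO_\nh$-basis $\{A_\lambda\}$ of $\tTh_k^-$ from (ii). To obtain the $\{\chi_\Lambda\}$-basis of $\tTh_k^+$, I will group the string function expansion of Proposition \ref{prop:ch} by $W_f$-orbits, yielding $\chi_\Lambda = \sum_{\Lambda' \leq \Lambda} \frac{c_{\Lambda'}^\Lambda}{|\mathrm{Stab}_{W_f}(\Lambda')|} S_{\Lambda'}$ with $\Lambda'$ ranging over $P_+^k/\C\delta$; triangularity in the dominance order is inherited from the fact that $c_{\Lambda'}^\Lambda = 0$ unless $\Lambda' \in \mathrm{max}(\Lambda) \pmod{\C\delta}$, and the diagonal coefficient $c_\Lambda^\Lambda$ has $q$-expansion $q^{m_\Lambda(\Lambda)}(1 + O(q))$ with leading coefficient $1$.

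The principal technical obstacle will be to show that each $c_\Lambda^\Lambda(\tau)$ is nowhere-vanishing on $\nh$, which is required to invert the triangular change-of-basis matrix inside $\cO_\nh$; this should follow from the modular transformation properties of the string functions.
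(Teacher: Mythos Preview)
The paper itself gives no proof of this proposition; it is simply quoted from \cite{KP}. Your arguments for parts (i), (ii), and the $\{A_\lambda\}$-half of (iii) are correct and well organized: the identification of $W_f$-orbits on $P^k/(kM+\C\delta)$ with $W$-orbits on $P^k/\C\delta$, the rewriting $A_\Lambda=\sum_{w\in W_f}\vep(w)\,\Theta_{w\Lambda}$ (using $\vep(t_\beta)=1$, which indeed follows from $t_{\vep_1}=r_{\alpha_0}r_\theta$), and the stabilizer analysis pinning down $P_{++}^k$ all go through as you describe.

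The $\{\chi_\Lambda\}$-half of (iii), however, contains a genuine error: the transition matrix $(c_{\Lambda'}^\Lambda)$ from $\{S_{\Lambda'}\}$ to $\{\chi_\Lambda\}$ is \emph{not} triangular. The affine root order does not descend to a partial order on $P_+^k/\C\delta$, because once one works modulo $\C\delta$ the cone $Q_+$ is no longer pointed. Concretely, for $l=1$ and $k=2$ one has $P_+^2/\C\delta=\{2\Lambda_0,\Lambda_1\}$, and \emph{both} off-diagonal string functions are nonzero: $2\Lambda_0-\alpha_0=\Lambda_1-\tfrac12\delta$ is a weight of $L(2\Lambda_0)$ (so $c_{\Lambda_1}^{2\Lambda_0}\neq 0$), while $\Lambda_1-\alpha_1-\alpha_0=2\Lambda_0-\tfrac12\delta$ is a weight of $L(\Lambda_1)$ (so $c_{2\Lambda_0}^{\Lambda_1}\neq 0$). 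There is thus no order in which the matrix becomes triangular. Even granting triangularity, your endpoint---the nowhere-vanishing of each $c_\Lambda^\Lambda$ on $\nh$---does not follow from modularity alone; modular forms generically have interior zeros, and the non-negativity of the $q$-coefficients of $c_\Lambda^\Lambda$ does not prevent zeros at complex $q$ in the open disc.

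The clean fix bypasses string functions altogether and uses what you have already proved. For $F\in\tTh_k^+$ one has $F\cdot A_\rho\in\tTh_{k+h^\vee}^-$, and by the $\{A_\lambda\}$-half of (iii) at level $k+h^\vee$ this module is free on $\{A_{\Lambda+\rho}:\Lambda\in P_+^k\bmod\C\delta\}$. Writing $F\cdot A_\rho=\sum_\Lambda f_\Lambda A_{\Lambda+\rho}$ with $f_\Lambda\in\cO_\nh$ and invoking \eqref{Weyl-Kac-A} gives $F=\sum_\Lambda f_\Lambda\chi_\Lambda$ on the dense set where $A_\rho\neq 0$, hence everywhere by holomorphy of both sides. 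Linear independence of the $\chi_\Lambda$ over $\cO_\nh$ likewise reduces, via multiplication by $A_\rho$, to that of the $A_{\Lambda+\rho}$. This is in effect the argument underlying both \cite{KP} and the Corollary immediately following the proposition.
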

As the map $P_+^k \rightarrow P_{++}^{k+2l+1}; \lambda \mapsto \lambda+\rho$ 
is bijective,  \eqref{Weyl-Kac-A} and the above proposition implies
\begin{cor}
The space of $W_f$-anti-invariants $\tTh^-$ is a free $\tTh^+$-module over 
$A_\rho$. 
\end{cor}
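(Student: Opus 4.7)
The plan is to exhibit multiplication by $A_\rho$ as a graded $\tTh^+$-module isomorphism from $\tTh^+$, with degrees shifted by $h^\vee=2l+1$, onto $\tTh^-$. Concretely, I want to check that the map $\mu_{A_\rho}\colon \tTh^+_k \to \tTh^-_{k+h^\vee}$, $F\mapsto F\cdot A_\rho$, is an $\cO_\nh$-linear isomorphism in every degree, and then read off the corollary from the grading together with the integral-domain property of $\tTh\subset \cO_Y$.

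First I would observe that $A_\rho$ itself is an element of $\tTh^-_{h^\vee}$: it is manifestly $W$-skew-invariant, hence $W_f$-skew-invariant, and $\rho \in P_{++}^{\,h^\vee}$ since $\langle h_i,\rho\rangle=1$ for all $i$ and $\langle c,\rho\rangle=\sum_{i=0}^l a_i^\vee=h^\vee$. Since the product of a $W_f$-invariant and a $W_f$-anti-invariant is $W_f$-anti-invariant, $\mu_{A_\rho}$ indeed lands in $\tTh^-_{k+h^\vee}$ and is $\cO_\nh$-linear.

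For the bijectivity in each degree I would invoke the Weyl--Kac formula \eqref{Weyl-Kac-A}, which yields $\chi_\Lambda\cdot A_\rho=A_{\Lambda+\rho}$ as elements of $\tTh_{k+h^\vee}$. Under the bijection $P_+^{\,k}\to P_{++}^{\,k+h^\vee}$, $\Lambda\mapsto \Lambda+\rho$, this sends the $\cO_\nh$-basis $\{\chi_\Lambda\}_{\Lambda\in P_+^k\bmod\C\delta}$ of $\tTh^+_k$ bijectively onto the $\cO_\nh$-basis $\{A_\mu\}_{\mu\in P_{++}^{k+h^\vee}\bmod\C\delta}$ of $\tTh^-_{k+h^\vee}$ supplied by the preceding proposition; hence $\mu_{A_\rho}$ is an $\cO_\nh$-module isomorphism in those degrees. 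For $k'<h^\vee$ on the target side, the set $P_{++}^{\,k'}$ is empty (since any strictly dominant integral weight has level at least $\sum_{i=0}^l a_i^\vee=h^\vee$), so the same proposition forces $\tTh^-_{k'}=0$. Collecting across all degrees gives $\tTh^-=\tTh^+\cdot A_\rho$.

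The only remaining point is that $A_\rho$ is a \emph{free} generator, i.e.\ $\mathrm{Ann}_{\tTh^+}(A_\rho)=0$. This is immediate because $\tTh$ is a subring of the integral domain $\cO_Y$ and $A_\rho\ne 0$. I don't anticipate a real obstacle here: the argument is purely a bookkeeping one, built entirely on the basis statements already recorded for $\tTh^+_k$ and $\tTh^-_k$ and on the identity $\chi_\Lambda\cdot A_\rho=A_{\Lambda+\rho}$; the only point one might want to state carefully is that this last identity, which is classically a formal-character statement, lifts without change to the level of holomorphic theta functions on $Y$ because both sides belong to $\tTh_{k+h^\vee}$ and agree as formal power series.
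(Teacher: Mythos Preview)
Your argument is correct and follows exactly the route the paper takes: the bijection $P_+^{\,k}\to P_{++}^{\,k+h^\vee}$, $\Lambda\mapsto\Lambda+\rho$, together with the identity $\chi_\Lambda\cdot A_\rho=A_{\Lambda+\rho}$ from \eqref{Weyl-Kac-A}, carries the $\cO_\nh$-basis of $\tTh_k^+$ to that of $\tTh_{k+h^\vee}^-$ supplied by the preceding proposition. You have simply made explicit the two points the paper leaves implicit (vanishing of $\tTh_{k'}^-$ in low degree and freeness via the integral domain $\cO_Y$), which is fine.
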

As $P_+ \, \text{mod}\,  \C \delta$ is the monoid generated by 
$\{\Lambda_i\}_{0\leq i\leq l}$ and $\langle c, \Lambda_i \rangle=a_i^\vee$, it can 
be shown that the Poincar\'{e} series $P_{\tTh^+}(T)$ of the graded 
$\cO_\nh$-algebra $\tTh^+$ is given by 
\[ P_{\tTh^+}(T):=\sum_{k \in \Z_{\geq 0}} \rank_{\cO_\nh}(\tTh^+_k)T^k
=\prod_{i=0}^l(1-T^{a_i^\vee})^{-1}. \] 
By the same computation, this implies
\begin{equation}\label{Poincare-ser-Th^+}
 \sum_{k \in \Z_{\geq 0}} \dim_\C(Th_k^+)T^k=\prod_{i=0}^l(1-T^{a_i^\vee})^{-1}. 
\end{equation}

For each $\tau \in \nh$, let $Y_\tau$ be the subset of $Y$ that 
corresponds to $\{ \tau\} \times \fh_f \times \C$ via the isomorphism 
$Y \cong \nh \times \fh_f \times \C$ discussed in \eqref{coord-system-II}
, $\iota_{Y_\tau}:Y_{\tau}\hookrightarrow Y$ the corresponding 
embedding. The above computation leads to the next conjecture: 
\begin{conj}[cf. \cite{KP}] \label{conj-KP}
Let $\fg$ be the affine Lie algebra of type $A_{2l}^{(2)}$. 
\begin{enumerate} 
\item[(i)] For each $\tau \in \nh$, $\tTh^+\vert_{Y_\tau}$ is a 
polynomial ring generated by $\{
\iota_{Y_\tau}^*(\chi_{\Lambda_i})\}_{0\leq i\leq l}$
over $\C$.
\item[(ii)] The graded ring $\tTh^+$ is a polynomial algebra
over $\cO_\nh$ generated by $\{\chi_{\Lambda_i}\}_{0\leq i\leq l}$.
\end{enumerate}
\end{conj}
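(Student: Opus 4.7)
The plan is to establish both parts of the conjecture through a Jacobian argument, following the strategy announced by Kac--Peterson in \cite{KP}. Using the coordinates $(\tau,z_1,\dots,z_l,t)$ of \eqref{coord-system-II}, define the Jacobian of the fundamental characters by
\[
J \;:=\; \det\bigl(\partial_{z_j}\chi_{\Lambda_i}\bigr)_{0\leq i\leq l,\; 1\leq j\leq l+1},
\]
with the convention $z_{l+1}:=t$. Because every $F\in\tTh_k$ satisfies $F(h+ac)=e^{ka}F(h)$, one has $\partial_tF=2\pi\sqrt{-1}\,k\,F$, so the last column is $(2\pi\sqrt{-1}\,a_i^\vee\,\chi_{\Lambda_i})_{i=0}^l$. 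A direct verification --- combining $W_f$-invariance of the $\chi_{\Lambda_i}$, the standard $W_f$-transformation of the partial derivatives $\partial_{z_j}$, and the translation behaviour of theta functions --- shows that $J$ lies in $\tTh_{h^\vee}^-$, where $h^\vee=\sum_i a_i^\vee=2l+1=\langle c,\rho\rangle=\deg A_\rho$. Since $\tTh^-$ is free of rank one over $\tTh^+$ generated by $A_\rho$ (the corollary preceding \eqref{Poincare-ser-Th^+}), there is a unique $g\in\tTh_0^+=\cO_\nh$ with
\[
J\;=\;g\cdot A_\rho.
\]

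The central technical step, and the main new contribution of the paper, is the explicit identification of $g$ (the entry of Table~J in \cite{KP} for type $A_{2l}^{(2)}$) and, crucially, the proof that $g$ does not vanish anywhere on $\nh$. I would proceed in two steps: first, compute the modular transformation law of $g$ under $\Gamma_\theta$ by dividing the transformations of $\chi_{\Lambda_i}$ in Theorem~\ref{thm_KP} by those of $A_\rho$ in Remark~\ref{rem_modular-denom}; second, extract the leading $q$-expansion of $g$ from the string-function expansion of Proposition~\ref{prop:ch} (where only the highest-weight strings of each $\chi_{\Lambda_i}$ contribute to the lowest order). Together these data pin $g$ down up to a non-zero constant as an explicit power of the Dedekind $\eta$-function, whose non-vanishing on $\nh$ is classical.

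Granted $g(\tau)\neq 0$ on $\nh$, both parts follow formally. For (i): at each $\tau_0\in\nh$ the restriction $J|_{Y_{\tau_0}}=g(\tau_0)\,A_\rho|_{Y_{\tau_0}}$ is non-zero, so the Jacobian criterion gives algebraic independence over $\C$ of $\{\iota_{Y_{\tau_0}}^\ast(\chi_{\Lambda_i})\}_{0\leq i\leq l}$; the polynomial subalgebra they generate has Poincar\'{e} series $\prod_{i=0}^l(1-T^{a_i^\vee})^{-1}$, which by \eqref{Poincare-ser-Th^+} equals $\sum_k\dim_\C(Th_k^+)\,T^k$, so dimension comparison in each graded piece forces equality. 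For (ii): algebraic independence of the $\chi_{\Lambda_i}$ over $\cO_\nh$ is inherited from (i) by specialization, and the graded $\cO_\nh$-module inclusion $\cO_\nh[\chi_{\Lambda_0},\dots,\chi_{\Lambda_l}]\hookrightarrow\tTh^+$ is a map of free modules of equal finite rank in each graded piece whose determinant is nowhere vanishing on $\nh$ by (i), hence a unit in $\cO_\nh$; Cramer's rule then gives an isomorphism globally. The principal obstacle lies in controlling $g$: as emphasized at the end of \S\ref{sect_affine-Weyl}, the translation lattice $M$ in type $A_{2l}^{(2)}$ is \emph{odd}, so the theta-transformation formulae carry half-integral phases that must be tracked carefully --- exactly the feature that forced Bernstein--Schwarzman to exclude this type in \cite{BS1,BS2}.
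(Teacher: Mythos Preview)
Your overall architecture is the paper's: write $J=g(\tau)A_\rho$ with $g\in\cO_\nh$, show $g$ never vanishes on $\nh$, and deduce (i) and (ii) by comparing Poincar\'e series. Your deductions of (i) and (ii) from the non-vanishing of $g$ are correct and essentially what the paper does (the paper phrases (ii) via the localizations $\cO_{\nh,\tau}$ and $\bigcap_\tau\cO_{\nh,\tau}=\cO_\nh$, which is equivalent to your Cramer's-rule argument).

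The genuine gap is in your identification of $g$. You claim that the modular transformation of $g$ under $\Gamma_\theta$ together with its leading $q$-term force it to be a power of $\eta$. This is false in type $A_{2l}^{(2)}$: the paper finds $g\propto\eta(\tau)^{l-1}F_{l+1}^{(2l+3)}(\tau)$ with $F_r^{(M)}$ the product of \S\ref{sect_theta}, which is \emph{not} an $\eta$-power (its leading $q$-exponent $\frac{1}{8(2l+3)}+\frac{l-1}{24}$ is not in $\frac{1}{24}\Z$). The reason your method cannot succeed is that $g$ is not a scalar $\Gamma_\theta$-modular form at all. The $S$-transformation acts on the level-$2$ characters $\{\chi_{\lambda_j}\}_{0\leq j\leq l}$ (with $\lambda_0=2\Lambda_0$) by the full matrix $M_S=(a(\lambda_i,\lambda_j))$ of \eqref{S-level-2}, so $J\vert_S$ is a linear combination of $l+1$ auxiliary Jacobians $J^i$ obtained by replacing the $\Lambda_i$-row by a $2\Lambda_0$-row; this is precisely \eqref{Jac-S}. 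One therefore cannot pin down $F^0=g$ in isolation. What the paper does is introduce the whole family $\{J^i\}_{0\leq i\leq l}$, compute the $(l+1)$-dimensional $\Gamma_\theta$-representation it spans (\eqref{Jac-S}--\eqref{Jac-T}) and the leading $q$-exponent of each component (\eqref{leading-Jac-i}), and then match this data against the known family $\{\eta^{l-1}F_{l+1-i}^{(2l+3)}\}_i$ via \eqref{Jacobi-F}. Non-vanishing of $g$ on $\nh$ then follows from the manifest non-vanishing of the infinite products defining $\eta$ and $F_{l+1}^{(2l+3)}$. The odd-lattice phases you flag are real but are absorbed into \eqref{Jac-T} and the cosines in \eqref{Jac-S}; the essential missing idea in your proposal is the passage to the vector of Jacobians.
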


%----------------------------------------------
\subsection{Jacobian of fundamental characters}
Let $\Lambda \in P_+$ be a dominant integral weight. For an integer 
$0\leq i\leq l$, we define the directional derivative $\partial_i \chi_\Lambda$ by
\[ (\partial_i \chi_\Lambda)(\tau, z, t):=\lim_{s \rightarrow 0} 
\frac{\chi_\Lambda(h+2\pi\sqrt{-1}sh_j)-\chi_\Lambda(h)}{s}. \]
As $c=h_0+2\sum_{i=1}^l h_i$, it follows that
\[ (\partial_0 \chi_{\Lambda})(\tau, z, t)=2\pi\sqrt{-1}\langle c, \Lambda \rangle 
\chi_\Lambda (\tau, z,t) -2\sum_{i=1}^l\partial_i\chi_{\Lambda}(\tau, z, t). \]
Hence, the Jacobian of the fundamental characters is, by definition, 
\[ J(\tau, z, t):=\det (\partial_j \chi_{\Lambda_i})_{i,j=0,1,\cdots, l}=
2\pi\sqrt{-1}
\begin{vmatrix}  \chi_0 & \partial_1\chi_0 & \cdots & \partial_l \chi_0 \\
                         2 \chi_1 & \partial_1\chi_1 & \cdots & \partial_l \chi_1 \\
                         \vdots & \vdots & & \vdots \\
                         2 \chi_l  & \partial_1\chi_l & \cdots & \partial_l \chi_l
                          \end{vmatrix}.
                          \]
It can be easily seen that this determinant is an element of $\tTh_{2l+1}^-$, i.e., 
there exists a holomorphic function $F \in \cO_\nh$ such that  
$J(\tau, z, t)=F(\tau)A_\rho(\tau, z, t)$. 

In the rest of this article, we will determine this function $F$ and prove 
Conjecture \ref{conj-KP}.

\begin{rem}
A weaker statement is proved by I. Bernstein and O. Schwarzmann 
\cite{BS1} and \cite{BS2} for any affine root systems except for $D_l^{(1)}$ and 
$A_{2l}^{(2)}$. 
\end{rem}

\section{Preliminary computations}
In this section, we study the modular transformations and leading terms of the Jacobian of certains characters. 
\subsection{Explicit formulas on $a(\lambda, \mu)$}
As $a_0^\vee=1$ and $a_i^\vee=2$ for $0<i\leq l$, one sees that 
\begin{enumerate}
\item[(i)] $P_+^1 \mod \C \delta =\{\Lambda_0\}$ and 
\item[(ii)] $P_+^2 \mod \C \delta = \{2 \Lambda_0\} \cup 
\{ \Lambda_i\}_{1\leq i\leq l}$. 
\end{enumerate}
\vskip 3mm

First, we compute $a(\Lambda_0, \Lambda_0)$. 
By Theorem \ref{thm_KP} and the 
denominator identity, we have
\begin{align*}
a(\Lambda_0, \Lambda_0) 
=
&(\sqrt{-1})^{l^2}(2(l+1))^{-\frac{1}{2}l}\sum_{w \in W_f} \vep(w)
\exp \left(-\frac{\pi\sqrt{-1}I(w(\overline{\rho}), \overline{\rho})}{l+1}\right) \\
=
&(\sqrt{-1})^{l^2}(2(l+1))^{-\frac{1}{2}l}
\prod_{\alpha \in \Delta_f^+}(e^{\frac{1}{2}\alpha}-e^{-\frac{1}{2}\alpha})\left(-
\frac{\pi\sqrt{-1}}{l+1}\overline{\rho}\right) \\
=
&2^{l^2-\frac{1}{2}l}(l+1)^{-\frac{1}{2}l}
\prod_{\alpha \in \Delta_f^+}\sin \left(\frac{I(\overline{\rho},\alpha)}{2(l+1)}\pi\right). 
\end{align*}
Hence, by \S \ref{sect_affine-Weyl} and the well-known formula
\begin{equation}\label{sine-gamma}
\prod_{k=1}^{n-1} \sin \left( \frac{k}{n}\pi\right)=\frac{n}{2^{n-1}}
\end{equation}
for $n \in \Z_{>1}$, it follows that
\begin{align*}
\prod_{\alpha \in \Delta_f^+}\sin \left(\frac{I(\overline{\rho},\alpha)}{2(l+1)}\pi\right) 
=
&\prod_{1\leq i<j\leq l}\sin\left(\frac{j-i}{2(l+1)}\pi\right)\sin\left(\frac{j+i}{2(l+1)}
\pi\right)
\prod_{i=1}^l \sin\left(\frac{i}{l+1}\pi\right) \\
=
&2^{-l^2+\frac{1}{2}l}\cdot (l+1)^{\frac{1}{2}l},
\end{align*}
thus we obtain 
\begin{equation}\label{S-level-1}
a(\Lambda_0, \Lambda_0)=1. 
\end{equation}

Next, for level $2$ case, we set 
\[ \lambda_i=\begin{cases} 2\Lambda_0 \qquad & i=0, \\ \Lambda_i \qquad & 
0<i\leq l.
\end{cases}
\]
By Remark \ref{rem_fin-Weyl}, for $\overline{\lambda+\rho}=\sum_{i=1}^l m_i 
\vep_i$ and $\overline{\mu+\rho}=\sum_{i=1}^l n_i \vep_i$, we have
\begin{align*}
&\sum_{w \in \fS_l \ltimes (\Z/2\Z)^l}\vep(w)\exp\left(-\frac{2\pi\sqrt{-1}
I(\overline{\lambda+\rho}, w(\overline{\mu+\rho}))}{2l+3}\right) \\
&\qquad =
\sum_{\sigma \in \fS_l} \vep(\sigma)
\prod_{r=1}^l \left(\exp\left(-\frac{2\pi\sqrt{-1}}{2l+3}m_in_{\sigma^{-1}(r)}\right)-
\exp\left(\frac{2\pi\sqrt{-1}}{2l+3}m_rn_{\sigma^{-1}(r)}\right)\right)\\
&\qquad =(-2\sqrt{-1})^l \sum_{\sigma \in \fS_l}\vep(\sigma)
\prod_{r=1}^l \left(2\sin\left(\frac{2m_rn_{\sigma^{-1}(r)}}{2l+3}\pi \right)\right)\\
&\qquad
=(-2\sqrt{-1})^l \det\left(\sin\left( \frac{2m_rn_s}{2l+3}\pi\right)
\right)_{1\leq r,s\leq l}.
\end{align*}
As $\overline{\Lambda_i+\rho}=\sum_{k\leq i}(l+2-k)\vep_k+\sum_{k>i}(l+1-k)
\vep_k$ and
\[ \det\left(\sin\left( \frac{2m_rn_s}{2l+3}\pi\right)\right)_{1\leq r,s\leq  l}=
\det\left(\sin\left( \frac{2m_{l+1-r}n_{l+1-s}}{2l+3}\pi\right)\right)_{1\leq r,s\leq  l}, 
\]
to compute $a(\lambda_i, \lambda_j)$, it suffices to compute the $(-1)^{i+j}$ times 
$(l+1-i,l+1-j)$-entry of the cofactor matrix of the matrix
\[ M_2:=\left( \sin\left(\frac{2rs}{2l+3}\pi\right)
\right)_{1\leq r,s \leq  l+1}. \] 
With the aid of $C_l$-type denominator identity (cf. \cite{Kr}), i.e., the identity 
\[\det(X_i^j-X_i^{-j})_{1\leq i,j\leq l}=
(\prod_{i=1}^l X_i)^{-l}\prod_{1\leq i<j\leq l}(1-X_iX_j)(X_i-X_j)\prod_{i=1}^l(X_i^2-1).
\]
in $\C[X_1^{\pm 1}, \cdots, X_l^{\pm 1}]$, one can show that
\[ \det M_2
=(-1)^{\frac{1}{2}l(l+1)}2^{-(l+1)}(2l+3)^{\frac{1}{2}(l+1)}. \]
Moreover, it can be shown that 
\[ M_2^2=\frac{1}{4}(2l+3)I_{l+1}. \]
Combining these facts, we obtain
\begin{equation}\label{S-level-2}
 a(\lambda_i,\lambda_j)=\frac{2}{\sqrt{2l+3}}\cos\left(\frac{(2i+1)(2j+1)}{2(2l+3)}
 \pi\right) \qquad (0\leq i,j\leq l). 
 \end{equation}
 %---------------------------------------------------------
\subsection{Modular transformation of Jacobians}
For $0\leq i\leq l$, set
\[J^{i}(\tau, z, t):=2\pi \sqrt{-1}\begin{vmatrix}
\chi_{\Lambda_0} & \partial_1 \chi_{\Lambda_0} & \cdots & 
\partial_l\chi_{\Lambda_0} \\
2\chi_{\lambda_{i_1}} & \partial_1 \chi_{\lambda_{i_1}} & \cdots & \partial_l 
\chi_{\lambda_{i_1}} \\
\vdots & \vdots & & \vdots \\
2\chi_{\lambda_{i_l}} & \partial_1 \chi_{\lambda_{i_l}} & \cdots & \partial_l 
\chi_{\lambda_{i_l}} \\
\end{vmatrix} \qquad (\tau, z, t) \in \nh \times \fh_f \times \C, \]
where $i_1, i_2, \cdots i_l$ are integers such that 
$i_1<i_2<\cdots <i_l$ and 
$\{i_1, i_2, \cdots, i_l\}=\{0, 1, \cdots, l\} \setminus \{i \}$. Note that 
\[J^0(\tau, z, t)=J(\tau, z, t).\]

Since these determinants are $W$-anti invariant and their degrees are 
$\sum_{i=0}^l a_i^\vee=2l+1$, it follows that $J^{i}(\tau, z, t) \in \tTh^-_{2l+1}
=\cO_\nh A_\rho$, i.e., there exist holomorphic functions $F^{i} \in \cO_\nh$ 
such that 
\begin{equation}\label{Jac1}
J^{i}(\tau, z, t)=F^{i}(\tau)A_\rho(\tau, z, t), 
\end{equation}
Below, we determine these holomorphic functions $\{ F^{i}(\tau)\}_{0\leq i\leq l}$ 
explicitly. For this purpose, we compute the modular transformations of 
$J^{i}(\tau, z, t)$. \\

Let $\lambda \in P_+$. Recall that, by Theorem \ref{thm_KP}, there exists a 
unitary matrix $(a(\lambda, \mu))_{\lambda, \mu \in P_+^{\langle c, \lambda\rangle} 
\mod \C \delta}$
such that
\[ \chi_\lambda\left( -\frac{1}{\tau}, \frac{z}{\tau}, t-\frac{
I^{\ast} (z,z)}{2\tau}\right)
=\sum_{\mu \in P_+^{\langle c, \lambda\rangle}} a(\lambda, \mu)
\chi_\mu(\tau, z,t). \]
Differentiating both sides of this formula, we obtain
\begin{align*} (\partial_i \chi_\lambda)\left(-\frac{1}{\tau}, \frac{z}{\tau}, 
t-\frac{I^{\ast}(z,z)}{2\tau}\right) =
&
\tau \sum_{\mu \in P_+^{\langle c, \lambda\rangle}}a(\lambda, \mu)
(\partial_i\chi_\mu)(\tau,z,t) \\
&
+2\pi\sqrt{-1}I^\ast(z,\alpha_i^\vee)\langle c, \lambda \rangle \sum_{\mu \in 
P_+^{\langle c, \lambda\rangle}} a(\lambda,\mu)\chi_\mu(\tau,z,t),
\end{align*}
for any $0< i \leq l$. \\

Set $M_S:=(a(\lambda_i, \lambda_j))_{0\leq i,j\leq l}$. 
Let $ \tilde{M}_S=\bigl((\tilde{M}_S)_{i,j}\bigr)_{0\leq i,j\leq l}$ be 
the cofactor matrix of $M_S$. 
For $0\leq i\leq l$, let $0\leq i_1< i_2<\cdots < i_l$ be non-negative integers such 
that $\{i_1, i_2, \cdots, i_l\}=\{0,1,\cdots, l\} \setminus \{ i\}$. 
By direct calculation, one obtains
\begin{align*}
&J^{i}\left(-\frac{1}{\tau}, \frac{z}{\tau}, t-\frac{I^\ast(z,z)}{2\tau}\right) \\
&=
2\pi \sqrt{-1}a(\Lambda_0, \Lambda_0)\tau^l\begin{vmatrix}
\chi_{\Lambda_0} & \partial_1\chi_{\Lambda_0} & \cdots & \partial_l 
\chi_{\Lambda_0} \\
2\sum_{j}a(\lambda_{i_1}, \lambda_j)\chi_{\lambda_j}
&\sum_{j}a(\lambda_{i_1}, \lambda_j)\partial_1\chi_{\lambda_j} & \cdots 
& \sum_{j}a(\lambda_{i_1}, \lambda_j)\partial_l\chi_{\lambda_j} \\
\vdots & \vdots & & \vdots \\
2\sum_{j}a(\lambda_{i_l}, \lambda_j)\chi_{\lambda_j}
&\sum_{j}a(\lambda_{i_l}, \lambda_j)\partial_1\chi_{\lambda_j} & \cdots 
& \sum_{j}a(\lambda_{i_l}, \lambda_j)\partial_l\chi_{\lambda_j}
\end{vmatrix} \\
&=\tau^l \sum_{j=0}^l (-1)^{i+j}(\tilde{M}_S)_{i,j}J^{j}(\tau, z, t).
\end{align*}

On the other hand, it can be checked that 
$M_S^2=I_{l+1}$. 
Recall the super denominator identity of type $B(0,n)$, i.e., the next 
identity in $\C[X_1^{\pm \frac{1}{2}}, \cdots X_n^{\pm \frac{1}{2}}]$:
\begin{equation}\label{denom-B(0,n)} \det(X_i^{j-\frac{1}{2}}+X_i^{-j+\frac{1}{2}})_{1\leq i,j\leq n}=(\prod_{i=1}^n X_i)^{-n+
\frac{1}{2}}
\prod_{1\leq i<j\leq n}(1-X_iX_j)(X_i-X_j)\prod_{i=1}^n(1+X_i).
\end{equation}
With the aid of this formula, we can show
\begin{lemma} 
$\det M_S=(-1)^{\binom{l+1}{2}}$. 
\end{lemma}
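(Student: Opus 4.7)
Set $n = l+1$, so that the matrix in question is $(n \times n)$ and $2l+3 = 2n+1$. My plan is to massage $\det M_S$ into a form to which the super denominator identity \eqref{denom-B(0,n)} of type $B(0,n)$ can be applied directly. Introduce the $n$ distinct points on the unit circle
\[
X_r := \exp\!\left(\frac{(2r-1)\pi\sqrt{-1}}{2n+1}\right), \qquad 1 \le r \le n,
\]
so that with the shift $r = i+1$, $s = j+1$ one has
\[
a(\lambda_{r-1},\lambda_{s-1}) \;=\; \frac{1}{\sqrt{2n+1}}\bigl(X_r^{\,s-\frac{1}{2}} + X_r^{-s+\frac{1}{2}}\bigr).
\]
Pulling the scalar $(2n+1)^{-1/2}$ out of each of the $n$ rows gives
$\det M_S = (2n+1)^{-n/2}\,\det\bigl(X_r^{s-1/2} + X_r^{-s+1/2}\bigr)_{1\le r,s\le n}$.

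\noindent Next, I apply the identity \eqref{denom-B(0,n)} with these $X_r$. This replaces the determinant by the explicit product
\[
\det M_S \;=\; (2n+1)^{-n/2}\Bigl(\prod_{r=1}^{n} X_r\Bigr)^{-n+\frac{1}{2}} \prod_{1\le r<s\le n}(1-X_rX_s)(X_r-X_s) \prod_{r=1}^{n}(1+X_r).
\]
All $X_r$ lie on the unit circle, and the equality $M_S^2 = I_{n}$ (already noted in the text) forces $\det M_S \in \{\pm 1\}$, so the computation reduces to showing that the right-hand side simplifies to $(-1)^{\binom{n}{2}}$.

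\noindent To evaluate the factors I would use the polynomial factorization
\[
t^{2n+1}+1 \;=\; (t+1)\prod_{r=1}^n (t - X_r)(t - \overline{X_r}),
\]
since the $2n+1$ roots of $t^{2n+1}+1=0$ are exactly $-1$ together with the pairs $X_r,\overline{X_r}$. Taking $t\to -1$ via L'H\^{o}pital gives $\prod_r(1+X_r)(1+\overline{X_r}) = 2n+1$, and writing $1+X_r = 2\cos(\theta_r/2)\,e^{\sqrt{-1}\theta_r/2}$ with $\theta_r = (2r-1)\pi/(2n+1)\in(0,\pi)$ (so $\cos(\theta_r/2)>0$) pins down both modulus and phase of $\prod_r(1+X_r)$. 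Analogous manipulations using $X_r - X_s = -2\sqrt{-1}\sin((\theta_s-\theta_r)/2)\,e^{\sqrt{-1}(\theta_r+\theta_s)/2}$ and $1 - X_rX_s = -2\sqrt{-1}\sin((\theta_r+\theta_s)/2)\,e^{\sqrt{-1}(\theta_r+\theta_s)/2}$ handle the double product; the combined modulus of all factors matches $(2n+1)^{n/2}$ and cancels the prefactor, while the accumulated phases (including $(\prod X_r)^{-n+1/2} = e^{-\pi\sqrt{-1}n^2(n-1/2)/(2n+1)}$) combine to a pure sign.

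\noindent The main obstacle is not conceptual but bookkeeping: correctly collecting the phases $e^{\sqrt{-1}(\theta_r+\theta_s)/2}$ coming from each of the $\binom{n}{2}$ pairs $(r,s)$, the $n$ factors $X_r^{1/2}$ from $(1+X_r)$, and the global factor $(\prod_r X_r)^{-n+1/2}$ into a single exponent, and showing that exponent is congruent to $\binom{n}{2}\pi$ modulo $2\pi$. The $\sqrt{-1}$'s from $(X_r - X_s)$ and $(1-X_rX_s)$ contribute $(-\sqrt{-1})^{2\binom{n}{2}} = (-1)^{\binom{n}{2}}$, which I expect to be the actual origin of the sign, with all other phase contributions cancelling against $(\prod X_r)^{-n+1/2}$ and the square-root terms. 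Once the bookkeeping is carried out one obtains $\det M_S = (-1)^{\binom{n}{2}} = (-1)^{\binom{l+1}{2}}$, as claimed.
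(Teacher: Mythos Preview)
Your proposal is correct and follows essentially the same approach as the paper: both set $X_r=\omega^{r-\frac12}$ with $\omega=e^{2\pi\sqrt{-1}/(2l+3)}$ and apply the $B(0,n)$ super denominator identity \eqref{denom-B(0,n)} to factor $\det M_S$ into the same product. The only difference is cosmetic: the paper evaluates that product by rewriting each factor as a sine or cosine and invoking \eqref{sine-gamma}, whereas you package the same computation via the factorization of $t^{2n+1}+1$ and a phase count --- either route completes the bookkeeping you describe.
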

\begin{proof}
Setting $\omega=\exp\left(\frac{2\pi\sqrt{-1}}{2l+3}\right)$, $n=l+1$ and $X_i=\omega^{i-\frac{1}{2}}$ ($1\leq i\leq l+1$) in \eqref{denom-B(0,n)}, we have
\begin{align*}
\det(M_s)=
&(2l+3)^{-\frac{1}{2}(l+1)}
\det\left(\omega^{(i-\frac{1}{2})(j-\frac{1}{2})}+\omega^{-(i-\frac{1}{2})(j-\frac{1}{2})}\right)_{1\leq i,j\leq l+1} \\
=
&(2l+3)^{-\frac{1}{2}(l+1)}\left(\prod_{i=1}^{l+1}\omega^{i-\frac{1}{2}}\right)^{-(l+\frac{1}{2})}
\prod_{1\leq <i<j\leq l+1}(1-\omega^{i+j-1})(\omega^{i-\frac{1}{2}}-\omega^{j-\frac{1}{2}})\prod_{i=1}^{l+1}(1+\omega^{i-\frac{1}{2}}).
\end{align*}
Now, by direct computation, it follows that
\begin{align*}
&\left(\prod_{i=1}^{l+1}\omega^{i-\frac{1}{2}}\right)^{-(l+\frac{1}{2})}
\prod_{1\leq <i<j\leq l+1}(1-\omega^{i+j-1})(\omega^{i-\frac{1}{2}}-\omega^{j-\frac{1}{2}})\prod_{i=1}^{l+1}(1+\omega^{i-\frac{1}{2}}) \\
=
&(-1)^{\binom{l+1}{2}}\cdot 2^{(l+1)^2} 
\prod_{1\leq i<j\leq l+1}\sin\left(\frac{i+j-1}{2l+3}\pi\right)
\sin\left(\frac{j-i}{2l+3}\pi\right) 
\prod_{i=1}^{l+1}\cos\left(\frac{2i-1}{2l+3}\cdot \frac{\pi}{2} \right) \\
=
&(-1)^{\binom{l+1}{2}}\cdot 2^{(l+1)^2} \left(\prod_{i=1}^{l+1}\sin\left(\frac{i}{2l+3}\pi\right)\right)^{l+1}.
\end{align*}
By \eqref{sine-gamma},  we obtain
\[
\det(M_S)=
(2l+3)^{-\frac{1}{2}(l+1)}\cdot (-1)^{\binom{l+1}{2}}\cdot 2^{(l+1)^2} 
\left(\prod_{i=1}^{2l+2}\sin\left(\frac{i}{2l+3}\pi\right)\right)^{\frac{1}{2}(l+1)} \\
=
 (-1)^{\binom{l+1}{2}}.
\]
\end{proof}

Hence, we obtain
\begin{align*}
(\tilde{M}_S)_{i,j}&=
(-1)^{\binom{l+1}{2}}(M_S)_{i,j}\\
&=(-1)^{\binom{l+1}{2}}a(\lambda_i, \lambda_j) \\
&=(-1)^{\binom{l+1}{2}}\frac{2}{\sqrt{2l+3}}\cos\left(\frac{(2i+1)(2j+1)}{2(2l+3)}
\pi\right). 
\end{align*}
Thus, we obtain the next formula:
\begin{equation}\label{Jac-S}
 \begin{split}
 &(-1)^{i}J^{i}\vert_S(\tau, z, t)\\
 &=(-1)^{i}J^{i}\left(-\frac{1}{\tau}, \frac{z}{\tau}, t-
 \frac{I^\ast(z,z)}{2\tau}\right) \\
 &=
 \tau^l (-1)^{\binom{l+1}{2}}\frac{2}{\sqrt{2l+3}}\sum_{j=0}^l
 \cos\left(\frac{(2i+1)(2j+1)}{2(2l+3)}\pi\right)(-1)^jJ^{j}(\tau, z, t). 
 \end{split}
\end{equation}

By definition, the conformal anomaly of the weights 
$\Lambda_0, \lambda_i (0\leq i \leq l)$ are given by
\[ m_{\Lambda_0}=-\frac{1}{24}l, \qquad m_{\lambda_i}
=\frac{1}{2(2l+3)}(2(l+1)i-i^2)-\frac{l(l+1)}{6(2l+3)}, \]
which implies 
\[ m_{\Lambda_0}+\sum_{i=0}^l m_{\lambda_i}=\dfrac{1}{24}l(2l+1). 
\]
Thus, we obtain the next formula:
\begin{equation}\label{Jac-T}
\begin{split}
(-1)^{i}J^{i}\vert_{T^2}(\tau, z, t)&=(-1)^{i}J^{i}( \tau+2, z, t)\\
&=\exp\left(4\pi\sqrt{-1}\left(\frac{1}{24}l(2l+1)-m_{\lambda_i}\right)\right)
(-1)^{i}J^{i}( \tau, z, t).
\end{split}
\end{equation}
%-----------------------------------
\subsection{Leading terms of Jacobians}\label{sect:lt-Jacobian}
In this subsection, we compute the leading degree of each Jacobian $
J^{i}(\tau, z, t)$ ($0\leq i\leq l$) as a $q$-series, where $q:=
e^{\pi\sqrt{-1}\tau}$. 
Let
\begin{align*}
& P^f=\{ \, \lambda \in \fh_f^\ast\,  \vert\,  \langle h_i, \lambda \rangle \in \Z 
\quad 1\leq \forall\, i \leq l\, \} \\
& P_+^f=\{ \, \lambda \in \fh_f^\ast\,  \vert\,  \langle h_i, \lambda \rangle \in 
\Z_{\geq 0} \quad 1\leq \forall\, i \leq l\, \}
\end{align*}
be the set of integral (resp. dominant integral) weights of $\fh_f$. The Weyl group 
acts on $P^f$, hence on its group algebra 
\[ \C[P^f]=\left\{\, \sum_{\lambda \in P^f} c_\lambda e^\lambda\, \left\vert \, 
\begin{matrix} \text{i)} & c_\lambda \in \C,  \\ \text{ii)} & \sharp \{ \lambda \,\vert 
\,c_\lambda \neq 0\}<\infty.
\end{matrix} \right\} \right.
\]
For $\lambda \in P^f$, set $a_\lambda:=\sum_{w \in W_f} \vep(w)e^{w(\lambda)}$. 
The set $\{ a_{\lambda+\overline{\rho}} \}_{\lambda \in P_+^f}$ spans the set of 
set of $W_f$-skew invariants $\C[P^f]^{- W_f}$ and it satisfies 
$a_{w(\lambda)}=\vep(w)a_{\lambda}$. For $\lambda \in P^f$, set 
$\chi_\lambda^f:=\dfrac{a_{\lambda+\overline{\rho}}}{a_{\overline{\rho}}}$. It follows 
that $\chi_\lambda^f\in \C[P^f]^{W_f}$ is the Weyl character formula for the finite 
root system $\Delta_f$ when $\lambda \in P_+^f$. \\

Now, regard $e^\lambda$ ($\lambda \in P^f$) as functions on $\fh_f$ defined by $h 
\mapsto e^{\langle h, \lambda \rangle}$. We rewrite the anti-invariant 
$A_{\Lambda+\rho}$  ($\Lambda \in P_k^+$) of the affine Weyl group $W$:
\begin{align*}
&A_{\Lambda+\rho}(\tau, z, t) \\
&=e^{2(k+2l+1)\pi\sqrt{-1}t}
\sum_{w \in W_f}\vep(w)\sum_{\alpha \in M}
q^{\frac{1}{2(k+2l+1)}\left\vert (k+2l+1)\alpha+w(\overline{\Lambda}+
\overline{\rho})\right\vert^2}e^{2\pi\sqrt{-1}\langle z, 
(k+2l+1)\alpha+w(\overline{\Lambda}+\overline{\rho}) \rangle} \\
&=e^{2(k+2l+1)\pi\sqrt{-1}t}
\sum_{\alpha \in M}
q^{\frac{1}{2(k+2l+1)}\left\vert (k+2l+1)\alpha+\overline{\Lambda}+\overline{\rho}
\right\vert^2}\sum_{w \in W_f}\vep(w)e^{2\pi\sqrt{-1}\langle z, w((k+2l+1)\alpha+
\overline{\Lambda}+\overline{\rho}) \rangle} \\
&= e^{2(k+2l+1)\pi\sqrt{-1}t}
\sum_{\gamma \in M+\frac{\overline{\Lambda}+\overline{\rho}}{k+2l+1}}
q^{\frac{1}{2}(k+2l+1)I(\gamma, \gamma)}\sum_{w \in W_f}
\vep(w)e^{2\pi\sqrt{-1}\langle z, w((k+2l+1)\gamma) \rangle} \\
&=e^{2(k+2l+1)\pi\sqrt{-1}t}
\sum_{\gamma \in M+\frac{\overline{\Lambda}+\overline{\rho}}{k+2l+1}}
a_{(k+2l+1)\gamma}(2\pi\sqrt{-1}z)q^{\frac{1}{2}(k+2l+1)I(\gamma, \gamma)},
\end{align*}
where $\vert \beta \vert^2:=I(\beta, \beta)$ for $\beta \in \fh_f^\ast$. 
In particular, for $\Lambda=0, \Lambda_0, \lambda_i\; 
(0\leq  i\leq l)$, the first few terms of this 
formula are given as follows.

\[\begin{aligned}
&e^{-2(2l+1)\pi\sqrt{-1}t}q^{-\frac{1}{12}l(l+1)} A_\rho(\tau,z,t) \\
&\quad =
a_{\overline{\rho}}(2\pi\sqrt{-1}z)
-a_{\overline{\Lambda}_1+\overline{\rho}}(2\pi\sqrt{-1}z)q^{\frac{1}{2}}
+a_{\overline{\Lambda}_1+\overline{\Lambda}_2+\overline{\rho}}
(2\pi\sqrt{-1}z)q^{\frac{3}{2}}\\
&\hspace*{70mm} -a_{2\overline{\Lambda_2}+\overline{\rho}}
(2\pi\sqrt{-1}z)q^2+O(q^{\frac{5}{2}}),  
\end{aligned}\leqno{\text{ }(\Lambda=0):}\]
\[\begin{aligned}
&e^{-4(l+1)\pi\sqrt{-1}t}q^{-\frac{1}{24}l(2l+1)}A_{\Lambda_0+\rho}(\tau,z,t) \\
&\quad
=a_{\overline{\rho}}(2\pi\sqrt{-1}z)-a_{2\overline{\Lambda}_1+\overline{\rho}}
(2\pi\sqrt{-1}z)q
+a_{2\overline{\Lambda_1}+\overline{\Lambda_2}+\overline{\rho}}
(2\pi\sqrt{-1}z)q^2+O(q^3), 
\end{aligned}\leqno{\text{ }(\Lambda=\Lambda_0}):\]
\[\begin{aligned}
&e^{-2(2l+3)\pi\sqrt{-1}t}q^{-\frac{l(l+1)(2l+1)}{12(2l+3)}}A_{2\Lambda_0+\rho}
(\tau,z,t) \\
&\quad =
a_{\overline{\rho}}(2\pi\sqrt{-1}z)-a_{3\overline{\Lambda}_1+\overline{\rho}}
(2\pi\sqrt{-1}z)q^{\frac{3}{2}}+a_{3\overline{\Lambda_1}+\overline{\Lambda_2}+
\overline{\rho}}(2\pi\sqrt{-1}z)q^{\frac{5}{2}}+O(q^{\frac{7}{2}}), 
\end{aligned}\leqno{\text{ }(\Lambda=2\Lambda_0):}\]
\[\begin{aligned}
&e^{-2(2l+3)\pi\sqrt{-1}t}q^{-\frac{l(l+1)(2l+1)}{12(2l+3)}-\frac{i(2(l+1)-i)}{2(2l+3)}}
A_{\Lambda_i+\rho}(\tau,z,t) \\
&\quad = a_{\overline{\Lambda_i}+\overline{\rho}}(2\pi\sqrt{-1}z)-
a_{\overline{\Lambda}_1+
\overline{\Lambda}_i+\overline{\rho}}(2\pi\sqrt{-1}z)q^{\frac{1}{2}}\\
&\hspace*{30mm}+
\begin{cases} 
a_{2(\overline{\Lambda_1}+\overline{\Lambda_2})+\overline{\rho}}
(2\pi\sqrt{-1}z)q^{\frac{5}{2}}+ O(q^3) \quad &i=1 \\
a_{2\overline{\Lambda_1}+\overline{\Lambda_2}+\overline{\Lambda_i}+
\overline{\rho}}(2\pi\sqrt{-1}z)q^{\frac{3}{2}}+O(q^2) \quad &i>1. \end{cases}\\
\end{aligned}\leqno{\text{ }(\Lambda=\lambda_i,i\ne 0):}\]

Thus, the normalized characters have the next expansions: 
\begin{align*}
&e^{-4\pi\sqrt{-1}t}q^{\frac{l(l+1)}{6(2l+3)}}\chi_{2\Lambda_0}(\tau,z,t) \\
&\qquad =
1+\chi_{\overline{\Lambda_1}}^f(2\pi\sqrt{-1}z)q^{\frac{1}{2}}+
(\chi_{\overline{\Lambda_1}}^f)^2(2\pi\sqrt{-1}z)q \\
&\hspace*{50mm}
+\left( (\chi_{\overline{\Lambda_1}}^f)^3-\chi_{3\overline{\Lambda_1}}^f-
\chi_{\overline{\Lambda_1}+\overline{\Lambda_2}}^f
\right)(2\pi\sqrt{-1}z)q^{\frac{3}{2}}+O(q^{2})
\end{align*}
and, for $0< i\leq l$,  
\begin{align*}
&e^{-4\pi\sqrt{-1}t}q^{\frac{3i(i-2(l+1))+l(l+1)}{6(2l+3)}}\chi_{\lambda_i}(\tau,z,t) \\
&\qquad =\chi_{\overline{\Lambda_i}}^f(2\pi\sqrt{-1}z)
+(\chi_{\overline{\Lambda_1}}^f\chi_{\overline{\Lambda_i}}^f-
\chi_{\overline{\Lambda_1}+\overline{\Lambda_i}}^f)(2\pi\sqrt{-1}z)q^{\frac{1}{2}}\\
&\hspace*{50mm}
+\left(\chi_{\overline{\Lambda_1}}^f
(\chi_{\overline{\Lambda_1}}^f\chi_{\overline{\Lambda_i}}^f
-\chi_{\overline{\Lambda_1}+\overline{\Lambda_i}}^f)\right)(2\pi\sqrt{-1}z)q
+O(q^{\frac{3}{2}}).
\end{align*}

Notice that $\chi_{\overline{\Lambda_1}}^f \chi_{\overline{\Lambda_i}}^f
-\chi_{\overline{\Lambda_1}+\overline{\Lambda_i}}^f
=\chi_{\overline{\Lambda_{i-1}}}^f +\chi_{\overline{\Lambda_{i+1}}}^f$ 
where we set $\chi_{\overline{\Lambda_{l+1}}}^f=0$. 
\vskip 0.1in

Now, we analyze the first few terms of $\chi_{\Lambda_0}$. 
Recall that the character of the fundamental representation $L(\Lambda_0)$ has
the special expression (cf. \cite{KP}):
\[ \ch L(\Lambda_0)=\frac{\sum_{\alpha \in M}e^{t_\alpha(\Lambda_0)}}
{\prod_{r=1}^\infty(1-e^{-r\delta})^l}. \]
Hence, it follows from the Jacobi triple product identity that
\[ \chi_{\Lambda_0}(\tau,z,t)=e^{2\pi\sqrt{-1}t}q^{-\frac{1}{24}l}
\prod_{i=1}^l\left(\prod_{r>0}(1+q^{r-\frac{1}{2}}e^{2\pi\sqrt{-1}\langle z, \vep_i 
\rangle})
(1+q^{r-\frac{1}{2}}e^{-2\pi\sqrt{-1}\langle z, \vep_i \rangle})\right). 
\]
With this expression, one can derive the leading terms of the normalized character 
$\chi_{\Lambda_0}(\tau,z,t)$ as follows.
 For $r \in \Z_{>0}$, thanks to the $W_f$-invariance, it can be shown that 
\begin{align*}
&\prod_{i=1}^l(1+q^{r-\frac{1}{2}}e^{2\pi\sqrt{-1}\langle z, \vep_i \rangle})
(1+q^{r-\frac{1}{2}}e^{-2\pi\sqrt{-1}\langle z, \vep_i \rangle}) \\
=
&q^{l(r-\frac{1}{2})}\left(\sum_{j=0}^{[\frac{l}{2}]}
\chi_{\overline{\Lambda_{l-2j}}}^f(2\pi\sqrt{-1}z)+
\sum_{i=1}^l (q^{i(r-\frac{1}{2})}+q^{-i(r-\frac{1}{2})})
\sum_{j=0}^{[\frac{l-i}{2}]}\chi_{\overline{\Lambda_{l-i-2j}}}^f(2\pi\sqrt{-1}z)\right) \\
=
&\sum_{i=0}^{2l} 
\left(\sum_{j=0}^{[\frac{\min\{i, 2l-i\}}{2}]}
\chi_{\overline{\Lambda_{ \min\{i, 2l-i\}-2j}}}^f(2\pi\sqrt{-1}z)\right)
q^{i(r-\frac{1}{2})},
\end{align*}
where we set $\chi_{\overline{\Lambda_0}=1}$ and $[x]$ for 
$x \in \R$ signifies the maximal integer $\leq x$. 
Thus, the leading terms of 
\[ e^{-2(2l+1)\pi\sqrt{-1}t}q^{-(m_{\Lambda_0}-m_{\lambda_i}+\sum_{j=0}^l 
m_{\lambda_j})}J^{i}(\tau, z, t)
\]
is given by the leading terms of 
\[ \begin{vmatrix}
1+O(q^{\frac{1}{2}}) & \partial_1
\chi_{\overline{\Lambda_i}}^f(2\pi\sqrt{-1}z)q^{\frac{i}{2}}
+O(q^{\frac{i+1}{2}}) & \cdots & 
\partial_l\chi_{\overline{\Lambda_i}}^f(2\pi\sqrt{-1}z)q^{\frac{i}{2}}
+O(q^{\frac{i+1}{2}}) \\
2+O(q^{\frac{1}{2}}) & \partial_1\chi_{\overline{\Lambda_1}}^f(2\pi\sqrt{-1}z)q
+O(q^{\frac{3}{2}}) & \cdots & 
\partial_l\chi_{\overline{\Lambda_1}}^f(2\pi\sqrt{-1}z)q
+O(q^{\frac{3}{2}}) \\
2\chi_{\overline{\Lambda_1}}^f(2\pi\sqrt{-1}z)+O(q^{\frac{1}{2}}) &
\partial_1\chi_{\overline{\Lambda_1}}^f(2\pi\sqrt{-1}z)+O(q^{\frac{1}{2}}) 
& \cdots & 
\partial_l \chi_{\overline{\Lambda_1}}^f(2\pi\sqrt{-1}z)+O(q^{\frac{1}{2}}) \\
\vdots & \vdots &  & \vdots \\
2\chi_{\overline{\Lambda_{i-1}}}^f(2\pi\sqrt{-1}z)+O(q^{\frac{1}{2}}) &
\partial_1\chi_{\overline{\Lambda_{i-1}}}^f(2\pi\sqrt{-1}z)+O(q^{\frac{1}{2}}) 
& \cdots & 
\partial_l \chi_{\overline{\Lambda_{i-1}}}^f(2\pi\sqrt{-1}z)+O(q^{\frac{1}{2}}) \\
2\chi_{\overline{\Lambda_{i+1}}}^f(2\pi\sqrt{-1}z)+O(q^{\frac{1}{2}}) &
\partial_1\chi_{\overline{\Lambda_{i+1}}}^f(2\pi\sqrt{-1}z)+O(q^{\frac{1}{2}}) 
& \cdots & 
\partial_l \chi_{\overline{\Lambda_{i+1}}}^f(2\pi\sqrt{-1}z)+O(q^{\frac{1}{2}}) \\
\vdots & \vdots &  & \vdots \\
2\chi_{\overline{\Lambda_l}}^f(2\pi\sqrt{-1}z)+O(q^{\frac{1}{2}}) &
\partial_1\chi_{\overline{\Lambda_l}}^f(2\pi\sqrt{-1}z)+O(q^{\frac{1}{2}}) 
& \cdots & 
\partial_l \chi_{\overline{\Lambda_l}}^f(2\pi\sqrt{-1}z)+O(q^{\frac{1}{2}})
\end{vmatrix}.
\]
In particular, the leading degree with respect to $q$ of this determinant is 
$\frac{1}{2}i$. Therefore, since
\[ m_{\Lambda_0}-m_{\lambda_i}+\sum_{j=0}^l m_{\lambda_j}=
\frac{(2i+1)^2}{8(2l+3)}+\frac{1}{24}(l-1)+\frac{1}{12}l(l+1)-\frac{1}{2}i,
\]
we have
\begin{equation}\label{leading-Jac-i}
\begin{split}
J^{i}(\tau, z, t) \; \propto \;
& e^{2(2l+1)\pi\sqrt{-1}t}q^{\frac{(2i+1)^2}{8(2l+3)}+\frac{1}{24}(l-1)
+\frac{1}{12}l(l+1)} \\
&\times \det(\partial_i \chi_{\overline{\Lambda_j}}^f(2\pi\sqrt{-1}z))_{1\leq i,j\leq l}
(1+O(q^{\frac{1}{2}})).
\end{split}
\end{equation} 
%---------------------------------------------
\subsection{Functional equations on $\{F^{i}(\tau)\}_{0\leq i\leq l}$}
By \eqref{Jac1} and Remark \ref{rem_modular-denom}, the equations
\eqref{Jac-S}, \eqref{Jac-T} and \eqref{leading-Jac-i}  imply the next equations:
\begin{equation}\label{Funct-Eq-F-i}
\begin{split}
(-1)^{i}F^{i}\left( -\frac{1}{\tau}\right)=
& \left(\frac{\tau}{\sqrt{-1}}\right)^{\frac{1}{2}l} \frac{2}{\sqrt{2l+3}}
\sum_{j=0}^l \cos\left(\frac{(2i+1)(2j+1)}{2(2l+3)}\pi\right)(-1)^jF^{j}(\tau), \\
 (-1)^{i}F^{i}(\tau+2, z, t)=
& \exp\left(-4\pi\sqrt{-1}\left(m_{\lambda_i}+\frac{1}{24}l\right)\right)(-1)^{i} 
F^{i}(\tau),
\end{split}
\end{equation}
and also with the computation of the leading term of $A_\rho$ in Subsection \ref{sect:lt-Jacobian}, one obtain
\begin{equation}
\begin{split}
(-1)^{i}F^{i}(\tau)\propto
&\; q^{\frac{(2i+1)^2}{8(2l+3)}+\frac{1}{24}(l-1)}(1+O(q^{\frac{1}{2}})),
\end{split}
\end{equation}
where $\propto$ means the left hand side is proportional to the right hand side up to a non-zero scalar factor. 
%------------------------------------
\section{Determination of Jaocbians}
\subsection{Some modular forms}\label{sect_theta}
For $m \in \Z_{>0}$ and $n \in \Z/2m\Z$, let
\[ \theta_{n,m}(\tau):=\sum_{k \in \Z}q^{m\left(k+\frac{n}{2m}\right)^2}, \]
be the classical theta functions. Their modular transformations are given by
\begin{equation}\label{mod-theta}
\begin{split}
&\theta_{n,m}(\tau+1)=e^{\frac{n^2}{2m}\pi\sqrt{-1}}\theta_{n,m}(\tau), \\
&\theta_{n,m}\left(-\frac{1}{\tau}\right)
=\left( \frac{\tau}{2m\sqrt{-1}}\right)^{\frac{1}{2}}\sum_{n' \in \Z/2m\Z}
e^{-\frac{nn'}{m}\pi\sqrt{-1}}\theta_{n',m}(\tau).
\end{split}
\end{equation}
Here, we recall the Jacobi triple product identity: 
\begin{equation}\label{Jacobi-triple}
\prod_{k=1}^\infty(1-p^k)(1-p^{k-1}w)(1-p^kw^{-1})=\sum_{l \in \Z} (-1)^l 
p^{\binom{l}{2}}w^l.
\end{equation}
The Dedekind eta-function is defined as follows: 
\[ \eta(\tau)=q^{\frac{1}{24}}\prod_{n=1}^\infty(1-q^n)=\sum_{m \in \Z} 
(-1)^mq^{\frac{3}{2}\left(m-\frac{1}{6}\right)^2}.
\]
It is a weight $\frac{1}{2}$ modular form: 
\[ \eta(\tau+1)=e^{\frac{\pi\sqrt{-1}}{12}}\eta(\tau), \qquad 
\eta\left(-\frac{1}{\tau}\right)
=\left(\frac{\tau}{\sqrt{-1}}\right)^{\frac{1}{2}}\eta(\tau). \]

Following \cite{KP}, for $M \in \Z_{>0}$ and $0\leq r<M$, set
\[ F_r^{(M)}(\tau):=q^{\frac{1}{8M}(M-2r)^2}
\prod_{\substack{n \equiv 0 \mod M \\ n>0}}(1-q^n)
\prod_{\substack{n \equiv r \mod M \\ n>0}}(1-q^n)
\prod_{\substack{n \equiv -r \mod M \\ n>0}}(1-q^n). \]
By definition, $F_0^{(M)}(\tau)=\eta(M\tau)^3$, and for $0<r<M$, the Jacobi triple 
product identity \eqref{Jacobi-triple} gives
\[ F_r^{(M)}(\tau)=\theta_{M-2r,2M}(\tau)-\theta_{M+2r,2M}(\tau). \]
In particular, one has
\[ F_r^{(M)}(\tau+1)=e^{\frac{(M-2r)^2}{4M}\pi\sqrt{-1}}F_r^{(M)}(\tau). \]
Assume that $M$ is odd (this is the only case we need).
Then, its Jacobi transformation $\tau \mapsto -\frac{1}{\tau}$ is given as follows:
for $0<r<M$, 
\begin{equation}\label{Jacobi-F}
F_r^{(M)}\left(-\frac{1}{\tau}\right)=
2\left(\frac{\tau}{M\sqrt{-1}}\right)^{\frac{1}{2}}(-1)^{r -\frac{1}{2}(M+1)} 
\displaystyle{\sum_{\substack{0<r'<M \\ r' \equiv 0 \mod 2}}
\sin\left(\frac{rr'}{M}\pi\right)e^{\frac{1}{2}r'\pi\sqrt{-1}}F_{\frac{r'}{2}}^{(M)}(\tau)}.
\end{equation}

Notice that, for $0<r<M$, one has
\[ F_{M-r}^{(M)}(\tau)=F_r^{(M)}(\tau). \]
%--------------------------------------------------------
\subsection{Conclusion}
It turns out that the functions 
$\{\eta(\tau)^{l-1}F^{(2l+3)}_{l+1-i}(\tau)\}_{0\leq i\leq l}$ enjoys the same 
properties as $\{(-1)^{i}F^{i}(\tau)\}_{0\leq i\leq l}$, i.e., 
\eqref{Funct-Eq-F-i}. Thus, we see that
\[ J^{i}(\tau,z,t) \propto (-1)^{i} \eta(\tau)^{l-1}F_{l+1-i}^{(2l+3)}(\tau)
A_\rho(\tau,z,t). \]

In particular, since $\eta^{l-1}F_{l+1}^{(2l+3)}$ never vanishes on $\nh$, we see that, 
for any $\tau \in \nh$, the fundamental characters 
$\{\chi_{\Lambda_i}\}_{0\leq i\leq l}$ are algebraically independent. In particular, 
we have proved the validity of the first part of the Conjecture \ref{conj-KP}: 
\begin{thm}\label{thm_main1} For any $\tau \in \nh$, we have
\[ \left. \tTh^+\right\vert_{Y_\tau} 
=\C[\iota_{Y_{\tau}}^\ast(\chi_{\Lambda_i})\; 
(0\leq i\leq l)], \]
where $\iota_{Y_{\tau}}: Y_\tau \hookrightarrow Y$ is a 
natural embedding.
\end{thm}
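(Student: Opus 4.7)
The plan is to combine the non-vanishing of the prefactor $\eta(\tau)^{l-1}F^{(2l+3)}_{l+1}(\tau)$ just observed with the Poincar\'{e} series identity \eqref{Poincare-ser-Th^+}. First I would note that for every fixed $\tau\in\nh$ the restriction $J|_{Y_\tau}$ is a nonzero holomorphic function on $Y_\tau$: indeed, $\eta(\tau)^{l-1}F^{(2l+3)}_{l+1}(\tau)$ never vanishes on $\nh$ (each factor $1-q^n$ with $n>0$ in its product expansion is nonzero when $\mathrm{Im}\,\tau>0$), and $A_\rho(\tau,z,t)$ is a nonzero element of $\tTh_{2l+1}^-$. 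Since $Y_\tau$ has complex dimension $l+1$ and $J$ is, up to the factor $2\pi\sqrt{-1}$ and elementary column operations that absorb $\partial_0$, the Jacobian determinant of the $l+1$ holomorphic functions $\iota_{Y_\tau}^\ast(\chi_{\Lambda_0}),\ldots,\iota_{Y_\tau}^\ast(\chi_{\Lambda_l})$, the holomorphic Jacobian criterion yields their algebraic independence over $\C$.

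Next I would translate this into a Hilbert series matching. Since $\chi_{\Lambda_i}$ is homogeneous of degree $a_i^\vee=\langle c,\Lambda_i\rangle$, the graded subalgebra $R_\tau:=\C[\iota_{Y_\tau}^\ast(\chi_{\Lambda_0}),\ldots,\iota_{Y_\tau}^\ast(\chi_{\Lambda_l})]\subset\tTh^+|_{Y_\tau}$ is a polynomial ring whose Hilbert series is $\sum_{k\geq 0}\dim_\C(R_\tau)_k\,T^k=\prod_{i=0}^l(1-T^{a_i^\vee})^{-1}$. On the other hand, the $W_f$-invariant refinement of Proposition \ref{prop_base-theta} presents $\tTh_k^+$ as a free $\cO_\nh$-module of rank $\dim_\C Th_k^+$ with basis $\{\chi_\Lambda\}_{\Lambda\in P_+^k\,\mathrm{mod}\,\C\delta}$, so evaluation at any point $\tau\in\nh$ gives $\dim_\C\tTh_k^+|_{Y_\tau}\leq\dim_\C Th_k^+$, and by \eqref{Poincare-ser-Th^+} this upper bound is exactly the coefficient of $T^k$ in $\prod_{i=0}^l(1-T^{a_i^\vee})^{-1}$.

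Therefore in each degree $k$ the inclusion $(R_\tau)_k\subset\tTh_k^+|_{Y_\tau}$ sits between two finite-dimensional $\C$-vector spaces of the same dimension, and is consequently an equality. Summing over $k$ yields the theorem. Essentially all the analytic content has already been supplied in the preceding sections; the main obstacle was the explicit identification of the prefactors $F^i(\tau)$ in the Jacobians via the functional equations \eqref{Funct-Eq-F-i} together with the modular transformations \eqref{Jacobi-F}. Once that is in hand, the present theorem is a short formal consequence of the Jacobian criterion combined with the Poincar\'{e} series \eqref{Poincare-ser-Th^+}.
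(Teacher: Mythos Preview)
Your proposal is correct and follows essentially the same line as the paper's own argument: non-vanishing of $\eta(\tau)^{l-1}F_{l+1}^{(2l+3)}(\tau)$ on $\nh$ gives, via the Jacobian criterion, algebraic independence of the restricted fundamental characters, and then the Poincar\'{e} series identity \eqref{Poincare-ser-Th^+} forces the graded inclusion $R_\tau\subset\tTh^+|_{Y_\tau}$ to be an equality degree by degree. The only point you might state more explicitly is that $A_\rho|_{Y_\tau}$ is not identically zero for each fixed $\tau$ (immediate from its $q$-expansion in \S\ref{sect:lt-Jacobian}), so that $J|_{Y_\tau}\not\equiv 0$ really follows.
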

Indeed, both of them are $\Z$-graded $\C$-algebras whose Poincar\'{e} series (cf. \eqref{Poincare-ser-Th^+}) are the same. 
\begin{rem} Let $L(c,h)$ be the irreducible highest weight module over the 
Virasoro algebra whose highest weight is $(c,h)$. For each integer $0\leq i\leq l$, 
set
\[ c_{2,2l+3}:=1-\frac{3(2l+1)^2}{2l+3}, \qquad 
h_{i}:=\frac{(2i+1)^2-(2l+1)^2}{8(2l+3)}.\]
We denote the normalized character $\tr_{L(c_{2,2l+3}, h_i)}(q^{L_0-\frac{1}{24}c})$ 
by $\chi_{1,l+1-i}(\tau)$. It turns out that $($cf. \cite{IK}$)$
\[ \chi_{1,l+1-i}(\tau)=\frac{F_{l+1-i}^{(2l+3)}(\tau)}{\eta(\tau)}. \]
In particular, this implies that
\[ J^{i}(\tau, z, t) \propto (-1)^{i} \eta(\tau)^l \chi_{1,l+1-i}(\tau)A_\rho(\tau,z,t). \]
\end{rem}

As for the second part of the Conjecture \ref{conj-KP}, we see that the 
$\cO_{\nh}$-algebra $\tTh^+$ contains the $\cO_\nh$-algebra generated by 
$\{\chi_{\Lambda_i}\}_{0\leq i\leq l}$, that is isomorphic to a polynomial algebra, 
as a subalgebra. In addition, as a graded $\cO_{\nh}$-algebra, the Poincar\'{e} 
series of $\tTh^+$ and $\cO_\nh[\chi_{\Lambda_i}\; (0\leq i\leq l)]$ coincide. 

Let $\cK_{\nh}$ be the quotient field of $\cO_\nh$. One has 
\[ \cK_{\nh}\otimes_{\cO_\nh} \tTh^+ \cong 
\cK_{\nh}[\chi_{\Lambda_i}\; (0\leq i\leq l)]; \qquad f \otimes F \; \longmapsto fF. \]
On the other hand, by Theorem \ref{thm_main1}, the above isomorphism restricts to
\[ \cO_{\nh, \tau} \otimes_{\cO_{\nh}} \tTh^+ 
\cong \cO_{\nh, \tau}[\chi_{\Lambda_i}\; (0\leq i\leq l)].
\]
Since $\bigcap_{\tau \in \nh} \cO_{\nh, \tau} =\cO_\nh$, this implies 
\begin{thm}\label{thm_main2} $\tTh^+ =\cO_\nh[\chi_{\Lambda_i}\; (0\leq i\leq l)].$
\end{thm}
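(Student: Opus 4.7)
The plan is to show that the graded $\cO_\nh$-algebra homomorphism $\Phi: \cO_\nh[X_0, \ldots, X_l] \to \tTh^+$, sending $X_i \mapsto \chi_{\Lambda_i}$, is an isomorphism by handling the generic point via the quotient field $\cK_\nh$ and then descending with the help of Theorem \ref{thm_main1}. The first step is injectivity, i.e.\ algebraic independence of the fundamental characters over $\cO_\nh$. From the computation preceding the theorem one has $J(\tau,z,t) \propto \eta(\tau)^{l-1} F_{l+1}^{(2l+3)}(\tau)\, A_\rho(\tau,z,t)$, with the scalar factor $\eta^{l-1}F_{l+1}^{(2l+3)}$ nowhere vanishing on $\nh$ and $A_\rho$ a nonzero element of $\tTh^-$; hence the Jacobian $J$ is nowhere zero, and a standard partial-derivative argument precludes any nontrivial polynomial relation among the $\chi_{\Lambda_i}$ over $\cO_\nh$.

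The second step is to extend scalars to the quotient field $\cK_\nh$ and conclude that $\cK_\nh \otimes_{\cO_\nh} \Phi$ is an isomorphism by a rank count. In each degree $k$, both source and target of $\Phi$ are free $\cO_\nh$-modules whose rank $p_k$ equals the coefficient of $T^k$ in $\prod_{i=0}^l(1-T^{a_i^\vee})^{-1}$, by \eqref{Poincare-ser-Th^+}. After tensoring with $\cK_\nh$, $\Phi$ becomes an injection between $\cK_\nh$-vector spaces of the same finite dimension, hence an isomorphism of graded $\cK_\nh$-algebras, yielding
\[
\cK_\nh \otimes_{\cO_\nh} \tTh^+ \;=\; \cK_\nh[\chi_{\Lambda_0}, \ldots, \chi_{\Lambda_l}].
\]
Inside this ring any $F \in \tTh^+$ admits a unique expansion $F = \sum_\alpha f_\alpha \chi^\alpha$ with $f_\alpha \in \cK_\nh$, and the problem reduces to verifying that each $f_\alpha$ in fact lies in $\cO_\nh$.

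The third step is a local-to-global argument. Fix $\tau \in \nh$ and let $\fm_\tau \subset \cO_\nh$ denote the ideal of functions vanishing at $\tau$. Since the classical theta functions $\Theta_\lambda|_{Y_\tau}$, indexed by $\lambda \in P^k \bmod (kM + \C\delta)$, remain linearly independent over $\C$, the evaluation map $\tTh^+_k / \fm_\tau \tTh^+_k \to (\tTh^+_k)|_{Y_\tau}$ is an isomorphism of $\C$-vector spaces of dimension $p_k$. Theorem \ref{thm_main1} identifies the right-hand side with $\C[\chi_{\Lambda_i}|_{Y_\tau}]_k$, so the monomials in the $\chi_{\Lambda_i}$ generate $\tTh^+_k$ modulo $\fm_\tau \tTh^+_k$. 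Nakayama's lemma, applied to the local ring $\cO_{\nh,\tau}$ and the finitely generated $\cO_{\nh,\tau}$-module $\cO_{\nh,\tau} \otimes_{\cO_\nh} \tTh^+_k$, then gives that $\cO_{\nh,\tau} \otimes \Phi$ is surjective, and an isomorphism by the injectivity obtained in Step 1. Consequently the coefficients $f_\alpha$ lie in every $\cO_{\nh,\tau}$, and $\bigcap_{\tau \in \nh} \cO_{\nh,\tau} = \cO_\nh$ forces $f_\alpha \in \cO_\nh$, completing the identification $\tTh^+ = \cO_\nh[\chi_{\Lambda_i}\;(0 \leq i \leq l)]$. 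The main obstacle I anticipate is the careful bookkeeping between algebraic specialization at $\tau$ and geometric restriction to $Y_\tau$; once the linear-independence of the restricted theta basis is established so that the evaluation iso $\tTh^+_k/\fm_\tau \tTh^+_k \cong (\tTh^+_k)|_{Y_\tau}$ holds, Nakayama and the intersection identity do the remaining work.
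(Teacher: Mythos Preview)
Your proposal is correct and follows essentially the same approach as the paper: establish the isomorphism over the quotient field $\cK_\nh$ by a rank count, use Theorem~\ref{thm_main1} to obtain the isomorphism over each local ring $\cO_{\nh,\tau}$, and conclude via $\bigcap_{\tau\in\nh}\cO_{\nh,\tau}=\cO_\nh$. Your version is more explicit than the paper's at the localization step, where you spell out the evaluation isomorphism $\tTh^+_k/\fm_\tau\tTh^+_k\cong(\tTh^+_k)\vert_{Y_\tau}$ and invoke Nakayama's lemma, whereas the paper simply asserts that the $\cK_\nh$-isomorphism ``restricts'' to an $\cO_{\nh,\tau}$-isomorphism.
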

Hence the second part of the Conjecture \ref{conj-KP} is also valid.

\end{document}